\newtheorem{thm}{Theorem}[section]
\newtheorem{lem}{Lemma}[section]
\newtheorem{nas}{Corollary}[section]
\theoremstyle{definition}
\newtheorem{ozn}{Definition}[section]
\newtheorem{exa}{Example}[section]
\newtheorem*{xrem}{Remark}
\DeclareMathOperator{\pr}{\mathsf P}
\theoremstyle{definition}
\newtheorem{ass}{Assumption}
\newcommand{\ff}{\ensuremath{\mathcal{F}}}
\newcommand{\bb}{\ensuremath{\mathcal{B}}}
\newcommand{\ww}{\ensuremath{\mathcal{W}}}
\newcommand{\jj}{\ensuremath{\mathcal{J}}}
\newcommand{\rr}{\ensuremath{\mathbb{R}}}
\newcommand{\cc}{\ensuremath{\mathbb{C}}}
\newcommand{\aaa}{{\sf A}}
\newcommand{\bbb}{{\sf B}}
\newcommand{\ccc}{{\sf C}}
\newcommand{\mm}{{\sf m}}
\newcommand{\sss}{{\sf S}}
\newcommand{\ssss}{\ensuremath{\mathcal{S}}}
\newcommand{\diam}{{\rm diam}\,}
\newcommand{\rp}{\stackrel{\pr}{\to}}
\begin{document}

\begin{center}\Large
Riemann integral of a random function and the parabolic equation with a general stochastic measure
\end{center}

\begin{center}
Vadym Radchenko \footnote{This research was supported by Alexander von Humboldt Foundation, grant no. UKR/1074615. The
author wishes to thank Prof. M.~Z\"{a}hle for fruitful discussions, and the hospitality of Jena University is
gratefully acknowledged.}
\end{center}



\begin{abstract}
For stochastic parabolic equation driven by a general stochastic measure, the weak solution is obtained. The integral
of a random function in the equation is considered as a limit in probability of Riemann integral sums. Basic properties
of such integrals are studied in the paper.
\end{abstract}

\section{Introduction}
\label{scintro} \setcounter{equation}{0}

In this paper we consider the stochastic parabolic equation, which can formally be written as
\begin{equation}\label{eqeopa}
dX(x,t)=AX(x,t)\,dt+f(x,t)\,d\mu(t),\quad X(x,0)=\xi(x),
\end{equation}
where $(x,t)\in\rr^d\times [0, T]$, $A$ is a second-order strongly elliptic differential operator, and $\mu$ is a
general stochastic measure defined on the Borel $\sigma$-algebra of $[0,T]$. For $\mu$ we assume $\sigma-$additivity in
probability only, assumptions for $A, f$ and $\xi$ are given in Section~\ref{scslpe}. Equation~\eqref{eqeopa} is
interpreted in the weak sense (see~\eqref{eqhbmuinw} below). We prove existence and uniqueness of solution.

Weak form of~\eqref{eqeopa} includes the integral of random function with respect to deterministic measure (Jordan
content). We interpret this integral as a limit in probability of Riemann integral sums. This definition of the
integral allows to interchange the order of integration with respect to deterministic and stochastic measures
(Theorem~\ref{thcisu}), that is important for solving the equation. A large part of the paper is devoted to the study
of this Riemann-type stochastic integral.

Parabolic stochastic partial differential equations (SPDEs) driven by the martingale measures had been introduced and
discussed initially in~\cite{walsh}. This approach was developed in~\cite{albeve, dalang}. Parabolic SPDEs as equations
in infinite dimensional space were studied in~\cite{dapratoz,peszab}. In these and many other papers the stochastic
noise has some distributional, integrability or martingale properties. In our paper, we consider very general class of
possible $\mu$ on $[0, T]$. On the other hand, the stochastic term in~\eqref{eqeopa} is independent of $u$. A reason is
that appropriate definition of integral of random function with respect to $\mu$ does not exist.

Some motivating examples for studying SPDEs may be found in \cite[Introduction]{dapratoz}, \cite[section 13.2]{kotele}.
For $A=\Delta$, equation~\eqref{eqeopa} describes the evolution in time of the density $X$ of some quantity such a heat
or chemical concentration in a system with random sources. In our model, the random influence can be rather general.

\section{Preliminaries}
\label{scprel} \setcounter{equation}{0}

Let $L_0=L_0(\Omega,\ff,\pr )$ be a set of all real-valued random variables defined on a complete probability space
$(\Omega, \ff, \pr )$ (equivalence classes of). Convergence in $L_0$ means the convergence in probability and is the
convergence in the quasi-norm
\[
\|\eta\|=\inf\{\delta:\pr\{|\eta|>\delta\}\le \delta\}.
\]
Note that $\|\eta_1+\eta_2\|\le \|\eta_1\|+\|\eta_2\|$. The following inequality will be used in the sequel
\begin{equation}\label{eqvaht}
\Bigl\|\sum_{k=1}^{l}c_{k}\xi_k\Bigr\|\le 8\max_{a_{k}=\pm 1}\Bigl\|\sum_{k=1}^{l}a_{k}\xi_k\Bigr\|\le
16\max_{V}\Bigl\|\sum_{k\in V}\xi_k\Bigr\|,\ |c_k|\le 1,\ \xi_k\in L_0,
\end{equation}
where the latter maximum is taken over all possible $V\subset \{1,\dots,l\}$ (see~\cite[Theorem 3]{rylwoy}).

Let $\sss$ be an arbitrary set and $\bb$ be a $\sigma$-algebra of subsets of $\sss$.

\begin{ozn}
Any $\sigma$-additive mapping $\mu:\bb\to L_0$ is called a \emph{stochastic measure}.
\end{ozn}

In other words, $\mu$ is a vector measure with values in $L_0$. We do not assume positivity or integrability for
stochastic measures. In~\cite{kwawoy} such a $\mu$ is called a general stochastic measure. In the following, $\mu$
always denotes a stochastic measure.

Examples of stochastic measures are the following. Let ${\sss}=[0, T]\subset\rr_+$, $\bb$ be the $\sigma$-algebra of
Borel subsets of $[0, T]$, and $Y(t)$ be a square integrable martingale. Then $\mu(\aaa)=\int_0^T {\bf
1}_{\aaa}(t)\,dY(t)$ is a stochastic measure. If $W^H(t)$ is a fractional Brownian motion with Hurst index~$H>1/2$ and
$f: [0, T]\to\rr$ is a bounded measurable function then $\mu(\aaa)=\int_0^T f(t){\bf 1}_{\aaa}(t)\,dW^H(t)$ is also a
stochastic measure, as follows from~\cite[Theorem~1.1]{memiva}. Some other examples may be found in~\cite[subsection
7.2]{kwawoy}. Theorem~8.3.1~\cite{kwawoy} states the conditions under which the increments of a real-valued L\'{e}vy
process generate a stochastic measure.

For deterministic measurable functions $g:\sss\to\rr$, an integral of the form $\int_{\sss}g\,d\mu$ is studied
in~\cite{radmon} (see also~\cite[Chapter 7]{kwawoy}, \cite{curbera}). The construction of this integral is standard,
uses an approximation by simple functions and is based on results of~\cite{rolewicz, talagr, turpin}. In particular,
every bounded measurable $g$ is integrable with respect to any~$\mu$. An analogue of the Lebesgue dominated convergence
theorem holds for this integral (see~\cite[Proposition 7.1.1]{kwawoy} or~\cite[Corollary 1.2]{radmon}).

For equations with stochastic measures, weak solutions of some SPDEs were obtained in~\cite{radu08}. Regularity
properties of mild solution of the stochastic heat equation were considered in~\cite{rads09}.

\section{Riemann integral of a random function}
\label{scdfri} \setcounter{equation}{0}

Let $\bbb\subset\rr^d$ be a Jordan measurable set, and $\xi:\bbb\to L_0$ be a random function. We shall say that $\xi$
has an integral on~$\bbb$ if for any sequence of partitions
\[
\bbb= \cup_{1\le k\le k_n} \bbb_{kn},\ n\ge 1,\ \max_k \diam \bbb_{kn}\to 0,\ n\to\infty,\ x_{kn}\in \bbb_{kn},
\]
the limit in probability
\begin{equation}\label{eqints}
{\rm p} \lim_{n\to\infty}\sum_{1\le k\le k_n} \xi(x_{kn})\mm(\bbb_{kn})=\int_{\bbb}\xi(x)\,dx
\end{equation}
exists. Here $\mm$ denotes the Jordan content, sets $\bbb_{kn},\ 1\le k\le k_n$, are assumed to be Jordan measurable
and have no common interior points. By mixing of different sequences of partitions, we can prove that the limit is
independent of the choice of the sequence. For deterministic $\xi$, our definition is equivalent to the definition of
the standard Riemann integral in~\cite{nikol}.

\begin{lem}\label{lmbdpr}
Let $\xi$ has an integral on~$\bbb=\prod_{k=1}^{d}[a_k,b_k]\subset\rr^d$. Then the set of values $\{\xi(x),\
x\in\bbb\}$ is bounded in probability.
\end{lem}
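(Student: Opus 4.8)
The plan is to argue by contradiction using the characterization of convergence in probability through the quasi-norm $\|\cdot\|$, together with the inequality~\eqref{eqvaht}. Suppose the set $\{\xi(x):x\in\bbb\}$ is \emph{not} bounded in probability. Then there exist $\varepsilon>0$ and a sequence of points $y_j\in\bbb$ with $\pr\{|\xi(y_j)|>j\}>\varepsilon$ for all $j$ (equivalently $\|\xi(y_j)\|$ does not go to $0$ after rescaling; the key point is that no single $\delta$ controls all the tails). I would then build a single sequence of partitions of the box $\bbb$ whose meshes tend to $0$, but along which the Riemann sums fail to converge in probability, contradicting the hypothesis that $\xi$ has an integral on $\bbb$.

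The construction goes as follows. For each $n$, take the uniform partition of $\bbb=\prod_{k=1}^d[a_k,b_k]$ into $N_n^d$ congruent subboxes, where $N_n\to\infty$ is chosen quickly enough (to be specified). In each such partition, one cell has content $\mm(\bbb_{kn})=\prod_k(b_k-a_k)/N_n^d$, which is a fixed positive number $c_n>0$. I would select the tag $x_{kn}$ in one distinguished cell to be one of the bad points $y_j$ with $j$ chosen so large that $j\cdot c_n$ is huge — say so that $\pr\{|\xi(y_j)\mm(\bbb_{kn})|>M_n\}>\varepsilon$ for some $M_n\to\infty$ — while on all other cells I choose the tags to be, say, a fixed point $x_0\in\bbb$. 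Comparing the Riemann sum for this choice of tags with the Riemann sum for the \emph{same} partition but with the distinguished tag also equal to $x_0$, the difference is exactly $(\xi(y_j)-\xi(x_0))\mm(\bbb_{kn})$, a single term whose quasi-norm is bounded below by a positive constant (using that $\pr\{|(\xi(y_j)-\xi(x_0))|>j/2\}>\varepsilon$ for large $j$, hence the rescaled variable has quasi-norm bounded away from $0$). Since both tag-choices give sequences of Riemann sums along partitions with mesh $\to0$, and the limit~\eqref{eqints} is asserted to exist and to be independent of the choice of tags (as noted after the definition, via mixing of sequences), the two limits must coincide — but their difference does not tend to $0$ in probability. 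This is the contradiction.

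The one technical point to get right is the quasi-norm estimate on a single term: I need that if $\pr\{|\eta|>\delta\}\ge\varepsilon$ for a value $\delta$ that can be taken arbitrarily large (after multiplying by the fixed content $c_n$ and letting $n$, hence $j$, grow), then $\|\eta\|$ is bounded below — indeed $\|a\eta\|\to\infty$ is impossible since $\|\cdot\|\le 1$ always, so the right statement is simply that $\|a\eta\|$ cannot tend to $0$ as $a\to\infty$ when $\eta\neq 0$ in probability; more precisely, if $\pr\{|\eta|>0\}\ge 2\varepsilon$ then for $\delta<\varepsilon$ one has $\pr\{|a\eta|>\delta\}\ge\varepsilon$ once $a$ is large, so $\|a\eta\|\ge\varepsilon$. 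Applying this with $\eta=\xi(y_j)-\xi(x_0)$, which is non-degenerate for infinitely many $j$ precisely because $\{\xi(y_j)\}$ is unbounded in probability while $\xi(x_0)$ is a fixed random variable, gives the needed lower bound uniformly in $n$.

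I expect the main obstacle to be bookkeeping rather than conceptual: one must make sure the two sequences of partitions-with-tags genuinely have mesh tending to $0$ (automatic for the uniform partitions), that the "mixing" remark really does force equality of the two limits (interleave the two tagged sequences into one sequence of partitions with mesh $\to0$; convergence of the whole sequence forces the two subsequential limits to agree), and that the choice of $N_n$ and the index $j=j(n)$ is made so that the bad term's content times its value stays non-negligible — for which it suffices to fix $N_n$ arbitrarily (even $N_n=n$) and then pick $j(n)$ large enough depending on $c_n$. The inequality~\eqref{eqvaht} is available as a backup if one instead prefers to extract the single bad term from a sum by comparing subsets $V$, but the direct "difference of two tag-choices" argument avoids needing it.
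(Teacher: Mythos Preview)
Your overall strategy---compare two Riemann sums for the same partition that differ only in one tag, and use the unboundedness of $\{\xi(y_j)\}$ to force their difference away from zero---is exactly the deterministic argument the paper is alluding to with its one-line proof. But there is a genuine slip in the execution. The tags $x_{kn}$ of a Riemann sum must lie in their respective cells $\bbb_{kn}$, so you cannot take ``all other tags equal to a fixed point $x_0$'', and more importantly you cannot take the comparison tag in the \emph{distinguished} cell to be $x_0$ unless $x_0$ happens to lie in that cell. Since your distinguished cell is the one containing $y_{j(n)}$ and this point moves with $n$, the comparison tag $z_n$ in that cell must move too; then $\xi(z_n)$ is no longer a single fixed random variable, and your last paragraph---which needs $\xi(x_0)$ fixed to conclude that $c_n(\xi(y_{j(n)})-\xi(x_0))\not\to 0$ in probability---no longer applies.

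The clean fix is to stop refining the partition. From existence of the integral one extracts (by the usual ``if not, build a bad sequence'' argument) a $\delta>0$ such that any two tagged Riemann sums with mesh $<\delta$ differ in quasi-norm by at most $\varepsilon_0/2$. Fix one such partition. In any single cell of content $c>0$, varying only the tag $y$ while keeping the other tags fixed shows $\|c\,(\xi(y)-\xi(z))\|\le\varepsilon_0/2$ for all $y,z$ in that cell; since $c$ is a fixed positive number and $\xi(z)$ is one random variable, this bounds $\{\xi(y):y\in\text{cell}\}$ in probability. A finite union over the cells finishes the proof. Equivalently, pigeonhole places infinitely many of your $y_j$ in a single cell, and then your difference-of-sums argument goes through with a genuinely fixed comparison tag $z$ in that cell.
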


\begin{proof}
Is analogous to the deterministic case.
\end{proof}

For some other $\bbb\subset\rr^d$, limit~\eqref{eqints} can exists for unbounded $\xi$ (for instance, in the case
$\mm(\bbb)=0$). We use the following

\begin{ozn}\label{dfintr}
Random function $\xi$ is called \emph{integrable on~$\bbb$} if $\xi$ has an integral on~$\bbb$ and set of values
$\{\xi(x),\ x\in\bbb\}$ is bounded in probability.
\end{ozn}

Let $\tilde{\bbb}\subset\rr^d$ be an unbounded set for which there exists a sequence of Jordan measurable sets
$\bbb^{(j)}$ such that
\begin{equation}\label{eqbbbj}
\bbb^{(j)}\uparrow\tilde{\bbb},\quad \forall c>0\ \exists j:\ \tilde{\bbb}\cap\{|x|\le c\}\subset \bbb^{(j)}
\end{equation} (we call $\bbb^{(j)}$ the
exhaustive sets). We shall say that $\xi$ is integrable (in improper sense) on~$\tilde{\bbb}$, if $\xi$ is integrable
on each~$\bbb^{(j)}$, and there exists the limit in probability
\[
{\rm p} \lim_{j\to\infty} \int_{\bbb^{(j)}}\xi(x)\,dx=\int_{\tilde{\bbb}}\xi(x)\,dx  ,
\]
that is independent of choice of~$\bbb^{(j)}$.

All bounded subsets of~$\rr^d$ used in the paper are assumed to be Jordan measurable, and all unbounded sets are
assumed to be approximable by Jordan measurable sets in the sense of~\eqref{eqbbbj}. Sets in partitions are assumed to
be non-overlapping.

Obviously, if $\xi$ has the Riemann integrable paths then $\xi$ is integrable in our sense. Theorem~\ref{thcisu} below
gives other examples of integrable random functions.

Further, we establish basic properties of the integral.

\begin{lem}\label{lmseta}
Let $\xi$ be integrable on $\bbb$. Then $\xi$ is integrable on each $\aaa\subset\bbb$, and for any $\varepsilon>0$
there exists $\delta>0$ such that for all $\aaa\subset\bbb$, $\aaa= \cup_{1\le k\le k_0} \aaa_{k}$, $x_{k}\in
\aaa_{k}$, $\diam \aaa_{k}<\delta$, holds
\[
\Bigl\|\sum_{1\le k\le k_0} \xi(x_{k})\mm(\aaa_{k})-\int_{\aaa}\xi(x)\,dx\Bigr\|<\varepsilon .
\]
\end{lem}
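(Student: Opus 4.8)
The plan is to reduce the statement to a Cauchy criterion for the integral on $\bbb$ and then transfer that criterion, with the \emph{same} $\delta$, to every subset $\aaa\subset\bbb$. First I would record the Cauchy-type property on $\bbb$: for each $\varepsilon'>0$ there is $\delta>0$ such that any two tagged partitions $\pi,\pi'$ of $\bbb$ all of whose pieces have diameter $<\delta$ satisfy $\|S(\xi,\pi)-S(\xi,\pi')\|<\varepsilon'$, where $S(\xi,\cdot)$ denotes the Riemann sum in~\eqref{eqints}. This follows from the hypothesis that $\xi$ has an integral on $\bbb$: were it false, one could choose for each $n$ tagged partitions $\pi_n,\pi_n'$ with mesh $<1/n$ and $\|S(\xi,\pi_n)-S(\xi,\pi_n')\|\ge\varepsilon_0>0$, interleave them into a single sequence of partitions with mesh tending to $0$, and then convergence in probability of the corresponding Riemann sums (hence the Cauchy property of that sequence in $L_0$) would force $\|S(\xi,\pi_n)-S(\xi,\pi_n')\|\to0$ --- a contradiction. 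This is exactly the ``mixing of partitions'' device already mentioned after~\eqref{eqints}.

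The main point is that one single $\delta=\delta(\varepsilon')$ serves all $\aaa\subset\bbb$ simultaneously. Given $\aaa$, the set $\ccc=\bbb\setminus\aaa$ is again Jordan measurable; cut it into non-overlapping Jordan pieces of diameter $<\delta$ by intersecting a sufficiently fine cubic grid with $\ccc$, attach arbitrary tags, and call the resulting tagged partition $\rho$. If $\pi_1,\pi_2$ are tagged partitions of $\aaa$ with mesh $<\delta$, then $\pi_1\cup\rho$ and $\pi_2\cup\rho$ (with $\rho$ the same for both) are tagged partitions of $\bbb$ with mesh $<\delta$ --- their pieces are pairwise non-overlapping, being contained in $\aaa$ and in $\bbb\setminus\aaa$ respectively --- and since $\rho$ contributes the same amount to each, $S(\xi,\pi_1\cup\rho)-S(\xi,\pi_2\cup\rho)=S(\xi,\pi_1)-S(\xi,\pi_2)$. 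The Cauchy property on $\bbb$ then yields $\|S(\xi,\pi_1)-S(\xi,\pi_2)\|<\varepsilon'$, i.e. the Cauchy criterion holds on each $\aaa$ with the $\aaa$-independent $\delta$. Using completeness of $L_0$ (convergence in probability on a complete probability space), this gives convergence in probability of the Riemann sums along any sequence of partitions of $\aaa$ with mesh $\to0$, and the same interleaving argument shows the limit $\int_{\aaa}\xi$ is independent of the sequence; since $\{\xi(x):x\in\aaa\}\subset\{\xi(x):x\in\bbb\}$ is bounded in probability, $\xi$ is integrable on $\aaa$ in the sense of Definition~\ref{dfintr}.

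Finally I would derive the uniform estimate. Given $\varepsilon>0$, set $\varepsilon'=\varepsilon/2$ and take the corresponding $\delta$. Let $\aaa\subset\bbb$, $\aaa=\cup_{1\le k\le k_0}\aaa_k$, $x_k\in\aaa_k$, $\diam\aaa_k<\delta$, and write $S=\sum_{1\le k\le k_0}\xi(x_k)\mm(\aaa_k)$. Pick tagged partitions $\pi_m$ of $\aaa$ with mesh $\to0$ and $S(\xi,\pi_m)\to\int_{\aaa}\xi$ in probability. For all large $m$ the mesh of $\pi_m$ is $<\delta$, so the Cauchy criterion on $\aaa$ gives $\|S-S(\xi,\pi_m)\|<\varepsilon'$, whence $\|S-\int_{\aaa}\xi\|\le\|S-S(\xi,\pi_m)\|+\|S(\xi,\pi_m)-\int_{\aaa}\xi\|<\varepsilon'+\|S(\xi,\pi_m)-\int_{\aaa}\xi\|$; letting $m\to\infty$ and using that the last term tends to $0$ gives $\|S-\int_{\aaa}\xi\|\le\varepsilon'<\varepsilon$. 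There is no real obstacle here: the one idea beyond the classical Riemann theory is the common-extension trick of the second paragraph that makes $\delta$ uniform in $\aaa$, and the only properties of $L_0$ used are the triangle inequality $\|\eta_1+\eta_2\|\le\|\eta_1\|+\|\eta_2\|$ and completeness.
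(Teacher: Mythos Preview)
Your proof is correct and follows essentially the same route as the paper: both arguments hinge on the ``common extension'' trick of completing any tagged partition of $\aaa$ to a tagged partition of $\bbb$ by adjoining a fixed fine partition of $\bbb\setminus\aaa$, so that differences of Riemann sums over $\aaa$ become differences of Riemann sums over $\bbb$. The paper packages this as a short proof by contradiction, while you unwind it into an explicit Cauchy criterion on $\bbb$ transferred to each $\aaa$ and then invoke completeness of $L_0$; your version is more detailed (in particular you spell out why the Cauchy property yields existence of $\int_{\aaa}\xi$ and derive the uniform estimate separately), but the underlying idea is identical.
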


\begin{proof}
Suppose the lemma were false. Then
\begin{multline*}
\exists\varepsilon_0>0\ \forall\delta>0\ \exists\aaa= \cup_{1\le k\le k_0} \aaa_{k}=\cup_{1\le i\le i_0} \aaa_{i}',\
\diam \aaa_{k},\ \diam \aaa_{i}'<\delta:\\
\Bigl\|\sum_{1\le k\le k_0} \xi(x_{k})\mm(\aaa_{k})-\sum_{1\le i\le i_0} \xi(x_{i}')\mm(\aaa_{i}')\Bigr\|\ge
\varepsilon_0 .
\end{multline*}
Take an arbitrary partition
\[
\bbb\setminus\aaa=\cup_{1\le j\le j_0} \ccc_{j},\quad \diam \ccc_{j}<\delta,
\]
and add
\[
\sum_{1\le j\le j_0} \xi(x_{j}'')\mm(\ccc_{j}),\quad x_{j}''\in\ccc_{j},
\]
to each of the considered sums on~$\aaa$. Thus we can get two integral sums on~$\bbb$ with arbitrary small diameters
such that the quasi-norm of their difference is greater than or equal to $\varepsilon_0$. This contradicts the
integrability of~$\xi$ on~$\bbb$.
\end{proof}

\begin{lem}
Let $\xi$ be integrable on~$\tilde{\bbb}$ in the improper sense, $\tilde{\aaa}\subset\tilde{\bbb}$. Then $\xi$ is
integrable on $\tilde{\aaa}$ (if $\tilde{\aaa}$ is an unbounded set, the integral is meant in the improper sense).
\end{lem}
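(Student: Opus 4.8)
The plan is to reduce everything to the already-proven Lemma~\ref{lmseta} by separating the bounded and unbounded cases. Consider first the case when $\tilde\aaa$ is \emph{bounded}. Pick any Jordan measurable set $\aaa$ with $\tilde\aaa\subset\aaa$ and $\aaa\subset\bbb^{(j_0)}$ for some $j_0$; such a $j_0$ exists by the second condition in~\eqref{eqbbbj}, since $\tilde\aaa\subset\{|x|\le c\}$ for some $c>0$. By Definition of improper integrability, $\xi$ is integrable (in the ordinary sense) on $\bbb^{(j_0)}$, hence bounded in probability there and in particular on $\aaa$. Now Lemma~\ref{lmseta}, applied with $\bbb$ replaced by $\bbb^{(j_0)}$, shows $\xi$ is integrable on $\aaa\subset\bbb^{(j_0)}$, and once more (with $\bbb$ replaced by $\aaa$) that $\xi$ is integrable on $\tilde\aaa\subset\aaa$. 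Boundedness in probability on $\tilde\aaa$ is inherited from $\aaa$. This disposes of the bounded case.

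For the case when $\tilde\aaa$ is \emph{unbounded}, I would build exhaustive sets for $\tilde\aaa$ out of those for $\tilde\bbb$. Let $\bbb^{(j)}$ be exhaustive sets for $\tilde\bbb$ as in~\eqref{eqbbbj}, and set $\aaa^{(j)}:=\bbb^{(j)}$ --- one has to check $\aaa^{(j)}$ is a legitimate choice of exhaustive sets for $\tilde\aaa$ in the sense of~\eqref{eqbbbj}: they are Jordan measurable, they increase to $\tilde\aaa$, and for every $c>0$ there is a $j$ with $\tilde\aaa\cap\{|x|\le c\}\subset\tilde\bbb\cap\{|x|\le c\}\subset\bbb^{(j)}$, hence $\tilde\aaa\cap\{|x|\le c\}\subset\aaa^{(j)}$. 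By the bounded case already treated, $\xi$ is integrable on each $\aaa^{(j)}=\bbb^{(j)}\cap\tilde\aaa$. It remains to show the limit in probability of $\int_{\aaa^{(j)}}\xi\,dx$ exists and is independent of the exhausting sequence. Existence of the limit over this particular sequence $\aaa^{(j)}$ follows because $\int_{\bbb^{(j)}}\xi\,dx$ converges in probability (improper integrability on $\tilde\bbb$) and one can write $\int_{\bbb^{(j)}}\xi\,dx=\int_{\aaa^{(j)}}\xi\,dx+\int_{\bbb^{(j)}\setminus\aaa^{(j)}}\xi\,dx$; but here one must argue that the second term converges, which again follows since $\bbb^{(j)}\setminus\aaa^{(j)}$ are exhaustive sets for the unbounded set $\tilde\bbb\setminus\tilde\aaa$ and $\xi$ is integrable there. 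Independence of the choice of exhausting sequence $\tilde\aaa^{(i)}$ for $\tilde\aaa$ then follows by the standard interlacing (mixing) argument: given two exhausting sequences, form a common refinement by taking, for each $c$, a member of one sequence containing $\tilde\aaa\cap\{|x|\le c\}$ and then a member of the other containing that, etc., so that the two sequences of partial integrals are intertwined and must share the same limit.

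The main obstacle I anticipate is the independence-of-exhausting-sequence part in the unbounded case, which is where the actual work lies: one needs to verify that the identity $\int_{\tilde\bbb}\xi\,dx=\int_{\tilde\aaa}\xi\,dx+\int_{\tilde\bbb\setminus\tilde\aaa}\xi\,dx$ makes sense and is stable, and this requires knowing that all three improper integrals exist. A clean way to organize this is: (i) show $\tilde\bbb\setminus\tilde\aaa$ (if unbounded) satisfies~\eqref{eqbbbj} with exhausting sets $\bbb^{(j)}\setminus\aaa^{(j)}$; (ii) for a \emph{given} pair of exhausting sequences of $\tilde\aaa$ and of $\tilde\bbb\setminus\tilde\aaa$, combine them into an exhausting sequence of $\tilde\bbb$, invoke improper integrability on $\tilde\bbb$ to get convergence of the combined integrals, and peel off; (iii) conclude by the mixing argument that each piece has a well-defined limit. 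All other steps --- Jordan measurability bookkeeping, boundedness in probability, reduction to Lemma~\ref{lmseta} --- are routine and parallel to the deterministic theory of improper Riemann integrals.
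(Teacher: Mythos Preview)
Your final-paragraph plan is essentially the paper's argument, but the way you reach it is more circuitous, and one step is not yet justified.

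First, a circularity you partly flag yourself: in your second paragraph you write that $\int_{\bbb^{(j)}\setminus\aaa^{(j)}}\xi\,dx$ converges ``since $\bbb^{(j)}\setminus\aaa^{(j)}$ are exhaustive sets for $\tilde\bbb\setminus\tilde\aaa$ and $\xi$ is integrable there.'' But integrability of $\xi$ on $\tilde\bbb\setminus\tilde\aaa$ is exactly the statement of the lemma with $\tilde\aaa$ replaced by its complement; you cannot invoke it. You recognize this in your ``main obstacle'' paragraph, so the earlier paragraph should be discarded, not patched. (Also, ``$\aaa^{(j)}:=\bbb^{(j)}$'' is a typo for $\bbb^{(j)}\cap\tilde\aaa$.)

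Second, the ``peel off'' in your step~(ii) hides the real point. If you combine exhaustive sequences of $\tilde\aaa$ and of $\tilde\bbb\setminus\tilde\aaa$ into \emph{one} sequence for $\tilde\bbb$, you only learn that the \emph{sum} $\int_{\aaa^{(k)}}\xi+\int_{\ccc^{(k)}}\xi$ converges; that does not force either summand to converge. The paper avoids this by using a \emph{two}-parameter family: take an \emph{arbitrary} exhausting sequence $\aaa^{(i)}\uparrow\tilde\aaa$ and the given $\bbb^{(j)}\uparrow\tilde\bbb$, and observe that $(\bbb^{(j)}\setminus\tilde\aaa)\cup\aaa^{(i)}$ is exhaustive for $\tilde\bbb$ as $i,j\to\infty$. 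Integrability on $\tilde\bbb$ then forces
\[
\int_{\bbb^{(j)}\setminus\tilde\aaa}\xi(x)\,dx+\int_{\aaa^{(i)}}\xi(x)\,dx
\]
to converge for every way of sending $i,j\to\infty$. If $\int_{\aaa^{(i)}}\xi$ were not Cauchy, one could pick two divergent subsequences in $i$, pair each with the same $j\to\infty$, and obtain two exhaustive sequences for $\tilde\bbb$ with different limits --- a contradiction. This is the ``peel off''; it needs the freedom to vary $i$ independently of $j$.

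Finally, because the paper begins with an \emph{arbitrary} $\aaa^{(i)}$, the independence-of-exhausting-sequence part comes for free, and your separate mixing/interlacing step is unnecessary. Your bounded case via Lemma~\ref{lmseta} is fine and is implicit in the paper's argument as well.
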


\begin{proof}
Take exhaustive sets $\bbb^{(j)}\uparrow \tilde{\bbb}$, $\aaa^{(i)}\uparrow \tilde{\aaa}$. Then sets
\[
(\bbb^{(j)}\setminus\tilde{\aaa})\cup \aaa^{(i)}\uparrow \tilde{\bbb},\quad i,j\to\infty
\]
are exhaustive too, and
\begin{equation}\label{eqijab}
\int_{\tilde{\bbb}}\xi(x)\,dx={\rm p} \lim_{i,j\to\infty}\Bigl(
\int_{\bbb^{(j)}\setminus\tilde{\aaa}}\xi(x)\,dx+\int_{\aaa^{(i)}}\xi(x)\,dx\Bigr).
\end{equation}
If ${\rm p} \lim_{i\to\infty}\int_{\aaa^{(i)}}\xi(x)\,dx$ does not exist then we can choose $i,\ j\to\infty$ such that
the limit in~\eqref{eqijab} does not exist.
\end{proof}

\begin{lem}\label{lmbipr}
Let $\xi$ be integrable on~$\bbb$. Then the set of values $\Bigl\{\int_{\aaa}\xi(s)\,ds,\ \aaa\subset\bbb\Bigr\}$ is
bounded in probability.
\end{lem}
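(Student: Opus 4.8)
The plan is to fix a single fine partition of $\bbb$ and use it simultaneously for all subsets $\aaa\subset\bbb$, reducing the claim to the boundedness in probability of a finite family of sums of signed terms, which can then be controlled via the inequality~\eqref{eqvaht}. First I would apply Lemma~\ref{lmseta} with, say, $\varepsilon=1$: there is a $\delta>0$ such that for every $\aaa\subset\bbb$ and every partition $\aaa=\cup_{1\le k\le k_0}\aaa_k$ with $\diam\aaa_k<\delta$ and every choice of tags $x_k\in\aaa_k$, one has $\bigl\|\sum_k\xi(x_k)\mm(\aaa_k)-\int_{\aaa}\xi(x)\,dx\bigr\|<1$. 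Now fix once and for all a partition $\bbb=\cup_{1\le k\le l}\bbb_k$ into non-overlapping Jordan measurable pieces with $\diam\bbb_k<\delta$, and fix tags $x_k\in\bbb_k$. For an arbitrary $\aaa\subset\bbb$ the sets $\aaa\cap\bbb_k$ need not have the right diameter only in the harmless sense that they are still of diameter $<\delta$; they do, so $\{\aaa\cap\bbb_k\}_k$ is an admissible partition of $\aaa$, and we may use the very tags $x_k$ whenever $x_k\in\aaa\cap\bbb_k$. The subtlety is that $x_k$ may fail to lie in $\aaa\cap\bbb_k$; I address this in the next step.

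To handle the tags cleanly, I would instead argue as follows. By Lemma~\ref{lmseta} applied to $\aaa$ itself, $\int_{\aaa}\xi(x)\,dx$ is within quasi-norm $1$ of \emph{any} fine tagged sum for $\aaa$; in particular it is within $1$ of $\sum_{k}\xi(y_k)\mm(\aaa\cap\bbb_k)$ for any tags $y_k\in\aaa\cap\bbb_k$ (ignoring the indices $k$ with $\aaa\cap\bbb_k=\emptyset$). So it suffices to bound, uniformly in $\aaa$ and in the tags, the quasi-norm of $\sum_{k}\xi(y_k)\mm(\aaa\cap\bbb_k)$. Since $0\le\mm(\aaa\cap\bbb_k)\le\mm(\bbb_k)$, writing $c_k=\mm(\aaa\cap\bbb_k)/M$ with $M=\max_k\mm(\bbb_k)$ (assume $M>0$, else $\mm(\bbb)=0$ and everything is trivial) gives $|c_k|\le 1$, so by~\eqref{eqvaht}
\[
\Bigl\|\sum_{k}\xi(y_k)\mm(\aaa\cap\bbb_k)\Bigr\|
= M\Bigl\|\sum_{k}c_k\,\xi(y_k)\Bigr\|
\le 16\,M\max_{V\subset\{1,\dots,l\}}\Bigl\|\sum_{k\in V}\xi(y_k)\Bigr\|.
\]
Each $\bigl\|\sum_{k\in V}\xi(y_k)\bigr\|\le\sum_{k\in V}\|\xi(y_k)\|\le\sum_{k=1}^{l}\|\xi(y_k)\|$, and by Lemma~\ref{lmbdpr} (applied on a box containing $\bbb$, or directly using that $\{\xi(x),x\in\bbb\}$ is bounded in probability by Definition~\ref{dfintr}) the quantity $R:=\sup_{x\in\bbb}\|\xi(x)\|$ is finite. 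Hence the displayed quasi-norm is at most $16\,M\,l\,R$, a bound independent of $\aaa$ and of the tags, and therefore $\bigl\|\int_{\aaa}\xi(x)\,dx\bigr\|\le 1+16\,M\,l\,R$ for every $\aaa\subset\bbb$, which is the assertion.

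The main obstacle I anticipate is the bookkeeping with the tags: one must be careful that the tags used in the reference sum from Lemma~\ref{lmseta} may be chosen \emph{inside} the sets $\aaa\cap\bbb_k$, so that the same collection of values $\xi(y_k)$ appears in the sum being estimated — this is exactly why I apply Lemma~\ref{lmseta} to $\aaa$ with a freely chosen tagged partition rather than trying to re-tag a fixed partition of $\bbb$. A minor point is the degenerate case $\mm(\bbb)=0$ (equivalently $M=0$): then every integral sum vanishes and, by Lemma~\ref{lmseta}, so does every integral $\int_{\aaa}\xi\,dx$, so the family is trivially bounded. Everything else is the routine use of~\eqref{eqvaht} and the subadditivity of $\|\cdot\|$.
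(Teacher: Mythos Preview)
Your strategy is the same as the paper's: both use Lemma~\ref{lmseta} to approximate each $\int_{\aaa}\xi$ by a Riemann sum for a single fixed partition of~$\bbb$ into finitely many pieces, then invoke the boundedness in probability of~$\xi$. The paper does this by contradiction (assuming $\|\tfrac{1}{n}\int_{\aaa_n}\xi\|\ge\varepsilon_0$), you attempt a direct bound. The direction is right, but there are two genuine errors in the execution.

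First, the quasi-norm $\|\eta\|=\inf\{\delta:\pr\{|\eta|>\delta\}\le\delta\}$ is \emph{not} positively homogeneous: in general $\|M\eta\|\ne M\|\eta\|$. So the displayed equality
\[
\Bigl\|\sum_{k}\xi(y_k)\mm(\aaa\cap\bbb_k)\Bigr\|
= M\Bigl\|\sum_{k}c_k\,\xi(y_k)\Bigr\|
\]
is simply false. (If you want to keep~\eqref{eqvaht}, apply it with the vectors $\mm(\bbb_k)\xi(y_k)$ and coefficients $\mm(\aaa\cap\bbb_k)/\mm(\bbb_k)$; but in fact plain subadditivity already suffices here and~\eqref{eqvaht} is unnecessary.)

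Second, and more seriously, your conclusion $\|\int_{\aaa}\xi\,dx\|\le 1+16MlR$ does \emph{not} yield the assertion of the lemma. A uniform bound on this quasi-norm is strictly weaker than boundedness in probability: if $\pr\{\eta_n=n\}=\pr\{\eta_n=0\}=\tfrac12$, then $\|\eta_n\|=\tfrac12$ for every~$n$, yet $\{\eta_n\}$ is not bounded in probability. What the later lemmas actually need (and what ``bounded in probability'' means throughout the paper) is that $\|\alpha\int_{\aaa}\xi\|\to 0$ uniformly in~$\aaa$ as $\alpha\to 0$. Your argument is easily repaired: apply Lemma~\ref{lmseta} with an arbitrary $\varepsilon>0$ (not just $\varepsilon=1$), note that the family of Riemann sums $\sum_{k}\xi(y_k)\mm(\aaa\cap\bbb_k)$ is bounded in probability because it consists of sums of $l$ terms with coefficients in $[0,\mm(\bbb)]$ drawn from the bounded-in-probability set $\{\xi(x):x\in\bbb\}$, and combine the two estimates at the level of probabilities. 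This is, up to the direct-versus-contrapositive packaging, exactly what the paper does.
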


\begin{proof}
Suppose the lemma were false. Then
\[
\exists\varepsilon_0>0, \ \aaa_n\subset\bbb,\ n\ge 1:\  \Bigl\|\dfrac{1}{n}\int_{\aaa_n}\xi(s)\,ds\Bigr\|\ge
\varepsilon_0.
\]
By Lemma~\ref{lmseta}, we can choose a partition $\bbb=\cup_{1\le k\le k_0} \bbb_k$ fine enough, such that all integral
sums for partitions $\aaa_n=\cup_{1\le k\le k_0} (\aaa_n\cap \bbb_k)$ will be close enough to the integrals
on~$\aaa_n$. Thus, for all $n$, $x_{kn}\in\aaa_n\cap\bbb_k$, we get
\[
\biggl\|\dfrac{1}{n}\sum_{1\le k\le k_0}\xi(x_{kn})\mm(\aaa_n\cap\bbb_k)\biggr\|\ge \dfrac{\varepsilon_0}{2} .
\]
Since the number of summands is fixed for all $n$, we arrive at a contradiction with boundedness of $\xi$.
\end{proof}

\begin{lem}\label{lmifxi}
Let $\xi$ be integrable on~$\bbb$, $f:\bbb\to\rr$ be a deterministic uniformly continuous on~$\bbb$ function. Then
$f\xi$ is integrable on~$\bbb$.
\end{lem}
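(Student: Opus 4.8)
The plan is to show directly that the Riemann sums for $f\xi$ converge in probability, by comparing them with the Riemann sums for $\xi$ and using the uniform continuity of $f$ together with the boundedness-in-probability of $\{\xi(x)\}$ (Lemma~\ref{lmbdpr}). Fix a sequence of partitions $\bbb=\cup_{1\le k\le k_n}\bbb_{kn}$ with $\max_k\diam\bbb_{kn}\to 0$ and tags $x_{kn}\in\bbb_{kn}$. The key observation is the telescoping/Abel-type decomposition
\[
\sum_{k}f(x_{kn})\xi(x_{kn})\mm(\bbb_{kn})
=\sum_{k}\bigl(f(x_{kn})-g_n\bigr)\xi(x_{kn})\mm(\bbb_{kn})+\sum_j g_n\,\sigma_{jn},
\]
where one approximates $f$ by a step function built on a \emph{coarser} fixed partition; more precisely, I would first fix a coarse partition of $\bbb$ into sets $\bbb_l$, $1\le l\le l_0$, on each of which $f$ oscillates by less than a prescribed $\eta>0$ (possible by uniform continuity), pick a point $y_l\in\bbb_l$, and then for the fine partition write $\sum_k f(x_{kn})\xi(x_{kn})\mm(\bbb_{kn})$ as $\sum_l f(y_l)\bigl(\sum_{k:\,x_{kn}\in\bbb_l}\xi(x_{kn})\mm(\bbb_{kn})\bigr)$ plus a remainder term $\sum_k (f(x_{kn})-f(y_{l(k)}))\xi(x_{kn})\mm(\bbb_{kn})$.

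For the remainder term, the coefficients $c_k:=(f(x_{kn})-f(y_{l(k)}))/\eta$ satisfy $|c_k|\le 1$ once $n$ is large enough that the fine partition refines (up to boundary sets of content zero) the coarse one, so by inequality~\eqref{eqvaht} its quasi-norm is bounded by $16\eta\max_V\|\sum_{k\in V}\xi(x_{kn})\mm(\bbb_{kn})\|$. Each sub-sum $\sum_{k\in V}\xi(x_{kn})\mm(\bbb_{kn})$ is a Riemann sum for $\xi$ over the Jordan measurable set $\cup_{k\in V}\bbb_{kn}\subset\bbb$, so by Lemma~\ref{lmseta} (and Lemma~\ref{lmbipr}) these are uniformly close, for large $n$, to $\int_{\cup_{k\in V}\bbb_{kn}}\xi(x)\,dx$, and the set of all such integrals is bounded in probability by Lemma~\ref{lmbipr}; hence the whole remainder has quasi-norm $\le C\eta$ uniformly in large $n$, for a constant $C$ not depending on $\eta$. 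For the main term $\sum_l f(y_l)\,\sigma_{l}^{(n)}$ with $\sigma_l^{(n)}:=\sum_{k:\,x_{kn}\in\bbb_l}\xi(x_{kn})\mm(\bbb_{kn})$: again by Lemma~\ref{lmseta} each $\sigma_l^{(n)}\to\int_{\bbb_l}\xi(x)\,dx$ in probability as $n\to\infty$, so the main term converges in probability to $\sum_l f(y_l)\int_{\bbb_l}\xi(x)\,dx$, a quantity depending only on the coarse partition and $\eta$.

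Combining: for each fixed $\eta$ the fine Riemann sums of $f\xi$ are, for all large $n$, within $C\eta$ (in quasi-norm) of a fixed random variable $S_\eta:=\sum_l f(y_l)\int_{\bbb_l}\xi\,dx$; letting $\eta\to 0$ shows the family $\{S_\eta\}$ is Cauchy in $L_0$, its limit $S$ is independent of all choices, and the fine Riemann sums converge in probability to $S$. Since this holds for every admissible sequence of partitions, $f\xi$ has an integral on $\bbb$; boundedness in probability of $\{f(x)\xi(x)\}$ is immediate from boundedness of $f$ and Lemma~\ref{lmbdpr}, so $f\xi$ is integrable on $\bbb$. The main obstacle is the bookkeeping needed to make $|c_k|\le 1$ legitimately: the fine partition does not exactly refine the coarse one, so one must handle the sets $\bbb_{kn}$ straddling boundaries of the $\bbb_l$ — these have total content tending to $0$, and their contribution is controlled again via~\eqref{eqvaht} and Lemma~\ref{lmbipr}, but this is where care is required.
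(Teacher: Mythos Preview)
Your argument is correct and takes a genuinely different route from the paper's. The paper never fixes a coarse partition: it compares two arbitrary fine Riemann sums directly via their common refinement $\{\bbb_{km}\cap\bbb_{in}\}$, splitting
\[
f(x_{km})\xi(x_{km})-f(x_{in})\xi(x_{in})=f(x_{in})\bigl[\xi(x_{km})-\xi(x_{in})\bigr]+\bigl[f(x_{km})-f(x_{in})\bigr]\xi(x_{km}),
\]
and controls each piece with \eqref{eqvaht} together with Lemmas~\ref{lmseta} and~\ref{lmbipr}; the second piece is essentially your remainder estimate, while the first requires a small technical correction (adding and subtracting a term) because the tag $x_{km}$ need not lie in $\bbb_{km}\cap\bbb_{in}$. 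Your step-function approximation of $f$ is conceptually cleaner and produces a candidate limit $S_\eta$ explicitly, but the price is exactly the boundary issue you flag at the end: to get $\sigma_l^{(n)}\to\int_{\bbb_l}\xi\,dx$ you need $\int_\ccc\xi\,dx\to 0$ whenever $\mm(\ccc)\to 0$, and this does \emph{not} follow from Lemma~\ref{lmbipr} alone (boundedness is not smallness). It does follow, though, by using Lemma~\ref{lmseta} with a single \emph{fixed} fine partition of $\bbb$, so that the approximating Riemann sum for $\int_\ccc\xi$ has a bounded number of terms and total weight $\mm(\ccc)$, and then invoking the boundedness in probability of $\{\xi(x)\}$. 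The paper's direct Cauchy comparison sidesteps this step entirely, at the cost of the slightly fiddly tag correction in the $S_1$ term.
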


\begin{proof} Consider the difference of two integral sums of~$f\xi$
\begin{multline*}
\Bigl\|\sum_{1\le k\le k_m} f(x_{km})\xi(x_{km})\mm(\bbb_{km})-\sum_{1\le i\le i_n}
f(x_{in})\xi(x_{in})\mm(\bbb_{in})\Bigr\|\\
= \Bigl\|\sum_{1\le k\le k_m,\ 1\le i\le i_n} [f(x_{km}\xi(x_{km})-f(x_{in})\xi(x_{in})]
\mm(\bbb_{km}\cap\bbb_{in})\Bigr\|\\
\le \Bigl\|\sum_{1\le k\le k_m,\ 1\le i\le i_n} [\xi(x_{km})-\xi(x_{in})]f(x_{in})
\mm(\bbb_{km}\cap\bbb_{in})\Bigr\|\\
+ \Bigl\|\sum_{1\le k\le k_m,\ 1\le i\le i_n} [f(x_{km})-f(x_{in})]\xi(x_{km})
\mm(\bbb_{km}\cap\bbb_{in})\Bigr\|=S_1+S_2 .
\end{multline*}
From~\eqref{eqvaht} for $|f(x)|\le C$ we get
\begin{equation}\label{eqsvxb}
S_1\le 16\max_{V} \Bigl\|C\sum_{(k,i)\in V} [\xi(x_{km})-\xi(x_{in})] \mm(\bbb_{km}\cap\bbb_{in})\Bigr\|  ,
\end{equation}
where the maximum is taken over all possible sets of pairs $(k,i)$.

For example, consider
\begin{multline*}
\sum_{(k,i)\in V} \xi(x_{km})\mm(\bbb_{km}\cap\bbb_{in}) = \sum_{1\le k\le k_m} \xi(x_{km})\Bigl[\sum_{i: (k,i)\in V}
\mm(\bbb_{km}\cap\bbb_{in}) +\mm(\bbb_{km}\cap\bbb_{i'n}){\bf 1}_{x_{km}\notin (\cup_{ i:
(k,i)\in V} \bbb_{in} )}\Bigr]\\
-\sum_{1\le k\le k_m} \xi(x_{km})\mm(\bbb_{km}\cap\bbb_{i'n}){\bf 1}_{x_{km}\notin (\cup_{ i: (k,i)\in V} \bbb_{in}
)}=I_1-I_2 .
\end{multline*}
Here $\bbb_{i'n}$ is one of the sets $\bbb_{in}$, $1\le i\le i_n$, that contains $x_{km}$. (If $x_{km}$ lies on the
border of $\bbb_{i'n}$, we take it only once.) $I_1$ and $I_2$ are integral sums and, by Lemma~\ref{lmseta}, they
approximate the integrals of $\xi$ on respective sets. Therefore, for diameter small enough, $I_1- I_2$ will be close
to the integral on $\cup_{(k,i)\in V}(\bbb_{km}\cap\bbb_{in})$. Similarly, $\sum_{(k,i)\in V}
\xi(x_{in})\mm(\bbb_{km}\cap\bbb_{in})$ approximate the integral on the same set, and we make the right hand side
of~\eqref{eqsvxb} arbitrary small by choosing the diameter.

Further, for any $\alpha>0$, for diameter small enough and $\bbb_{km}\cap\bbb_{in}=\emptyset$, we have
$|f(x_{km})-f(x_{in})|<\alpha$ in~$S_2$. Inequality~\eqref{eqvaht} implies
\[
S_2\le 16 \max_{V} \Bigl\|\alpha\sum_{(k,i)\in V} \xi(x_{km}) \mm(\bbb_{km}\cap\bbb_{in})\Bigr\|.
\]
As before, we can make the sum arbitrary close to the integral on $\cup_{(k,i)\in V}(\bbb_{km}\cap\bbb_{in})$. From
Lemma~\ref{lmbipr} it follows that $S_2\to 0$ as $\alpha\to 0$.
\end{proof}

\begin{lem}\label{lminfx}
Let $\xi$ be integrable on~$\bbb$, $f:\bbb\to\rr$ be a deterministic uniformly continuous on~$\bbb$ function,
$|f(x)|\le C$. Then
\[
\Bigl\|\int_{\bbb}f(x)\xi(x)\,dx\Bigr\|\le 16\sup_{\aaa\subset\bbb}\Bigl\|C\int_{\aaa}\xi(x)\,dx\Bigr\| .
\]
\end{lem}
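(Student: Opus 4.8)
The plan is to estimate $\int_{\bbb}f(x)\xi(x)\,dx$ — which exists by Lemma~\ref{lmifxi} — by a single Riemann integral sum over a sufficiently fine partition of $\bbb$, then use inequality~\eqref{eqvaht} to replace the bounded coefficients $f(x_k)$ by signs, and finally recognize each resulting subsum as an integral sum of $C\xi$ over a Jordan measurable subset of $\bbb$, to which Lemma~\ref{lmseta} applies. We may assume $C>0$, otherwise $f\equiv 0$ and the statement is trivial.

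First I would record two preliminary facts. The random function $C\xi$ is integrable on $\bbb$ (by Lemma~\ref{lmifxi} with constant $f$, or directly from the definition), and by linearity of the limit in~\eqref{eqints} one has $\int_{\aaa}(C\xi)(x)\,dx=C\int_{\aaa}\xi(x)\,dx$ for every $\aaa\subset\bbb$. Now fix $\varepsilon>0$. Applying Lemma~\ref{lmseta} to $f\xi$ and to $C\xi$, choose $\delta>0$ such that for every $\aaa\subset\bbb$, every partition $\aaa=\cup_{k}\aaa_k$ with $\diam\aaa_k<\delta$, and every choice of tags, the corresponding integral sums of $f\xi$ and of $C\xi$ are within $\varepsilon$ (in quasi-norm) of $\int_{\aaa}f\xi\,dx$ and of $\int_{\aaa}C\xi\,dx$, respectively.

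Next, take a partition $\bbb=\cup_{1\le k\le k_0}\bbb_k$ with $\diam\bbb_k<\delta$ and tags $x_k\in\bbb_k$. Then
\[
\Bigl\|\int_{\bbb}f(x)\xi(x)\,dx\Bigr\|\le\varepsilon+\Bigl\|\sum_{1\le k\le k_0} f(x_k)\xi(x_k)\mm(\bbb_k)\Bigr\|,
\]
and by~\eqref{eqvaht}, applied with $c_k=f(x_k)/C$ (so $|c_k|\le 1$) and $\xi_k=C\xi(x_k)\mm(\bbb_k)$,
\[
\Bigl\|\sum_{1\le k\le k_0} f(x_k)\xi(x_k)\mm(\bbb_k)\Bigr\|\le 16\max_{V}\Bigl\|\sum_{k\in V} C\xi(x_k)\mm(\bbb_k)\Bigr\|.
\]
For each $V\subset\{1,\dots,k_0\}$ the set $\aaa_V=\cup_{k\in V}\bbb_k$ is a Jordan measurable subset of $\bbb$, and $\sum_{k\in V}C\xi(x_k)\mm(\bbb_k)$ is an integral sum of $C\xi$ on $\aaa_V$ with partition diameter $<\delta$; hence it is within $\varepsilon$ of $\int_{\aaa_V}C\xi\,dx=C\int_{\aaa_V}\xi\,dx$, so
\[
\Bigl\|\sum_{k\in V} C\xi(x_k)\mm(\bbb_k)\Bigr\|\le\varepsilon+\Bigl\|C\int_{\aaa_V}\xi(x)\,dx\Bigr\|\le\varepsilon+\sup_{\aaa\subset\bbb}\Bigl\|C\int_{\aaa}\xi(x)\,dx\Bigr\|.
\]
Combining the three displays gives $\bigl\|\int_{\bbb}f\xi\,dx\bigr\|\le 17\varepsilon+16\sup_{\aaa\subset\bbb}\bigl\|C\int_{\aaa}\xi\,dx\bigr\|$, and letting $\varepsilon\to 0$ yields the claim.

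The main obstacle is the lack of homogeneity of the quasi-norm $\|\cdot\|$: one cannot simply pull the constant $C$ out of $\|C\int_{\aaa}\xi\,dx\|$. This is handled by absorbing $C$ into the summands before invoking~\eqref{eqvaht} and by applying Lemma~\ref{lmseta} to $C\xi$ (rather than to $\xi$ and then rescaling), using $\int_{\aaa}(C\xi)\,dx=C\int_{\aaa}\xi\,dx$. Once this bookkeeping is in place, the argument is just a combination of~\eqref{eqvaht} with the uniform approximation provided by Lemma~\ref{lmseta}.
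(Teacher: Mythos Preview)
Your proof is correct and follows essentially the same approach as the paper: apply inequality~\eqref{eqvaht} to a Riemann sum of $f\xi$, then use Lemmas~\ref{lmseta} and~\ref{lmifxi} to pass to the limit. The paper's proof is a two-line sketch of exactly this argument; your version supplies the details, including the careful handling of the constant $C$ inside the quasi-norm.
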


\begin{proof}
The inequality for respective integral sums follows from~\eqref{eqvaht}. Further, we pass to the limit and apply
Lemmas~\ref{lmseta} and~\ref{lmifxi}.
\end{proof}

\begin{lem}\label{lmfnxi}
Let $\xi$ be integrable on~$\bbb$, $f_n:\bbb\to\rr,\ n\ge 1$ be a deterministic uniformly continuous on~$\bbb$
functions, $\sup_{x\in \bbb}|f_n(x)|\to 0,\ n\to\infty$. Then
\[
\int_{\bbb} f_n(x)\xi(x)\,dx\rp 0,\quad n\to\infty.
\]
\end{lem}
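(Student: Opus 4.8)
The plan is to combine Lemma~\ref{lminfx} with the boundedness assertion of Lemma~\ref{lmbipr}. By hypothesis, $\sup_{x\in\bbb}|f_n(x)|\to 0$; set $C_n=\sup_{x\in\bbb}|f_n(x)|$, so $C_n\to 0$ as $n\to\infty$. We may assume $C_n>0$ (if some $C_n=0$ the corresponding integral is trivially zero). Each $f_n$ is uniformly continuous on~$\bbb$ with $|f_n(x)|\le C_n$, so Lemma~\ref{lminfx} applies and yields
\[
\Bigl\|\int_{\bbb} f_n(x)\xi(x)\,dx\Bigr\|\le 16\sup_{\aaa\subset\bbb}\Bigl\|C_n\int_{\aaa}\xi(x)\,dx\Bigr\|.
\]

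Next I would exploit that the quasi-norm $\|\cdot\|$ behaves well under scaling by constants $\le 1$ together with the boundedness in probability of the family of integrals. By Lemma~\ref{lmbipr}, the set $\bigl\{\int_{\aaa}\xi(s)\,ds:\aaa\subset\bbb\bigr\}$ is bounded in probability; that is, for every $\varepsilon>0$ there is $M$ with $\pr\{|\int_{\aaa}\xi(s)\,ds|>M\}\le\varepsilon$ for all $\aaa\subset\bbb$. Given $\varepsilon>0$, fix such an $M$, and then choose $N$ so that $n\ge N$ implies $C_n M\le \varepsilon$ and $C_n\le 1$. For $n\ge N$ and any $\aaa\subset\bbb$ we then have
\[
\pr\Bigl\{\Bigl|C_n\int_{\aaa}\xi(s)\,ds\Bigr|>\varepsilon\Bigr\}\le \pr\Bigl\{\Bigl|\int_{\aaa}\xi(s)\,ds\Bigr|>M\Bigr\}\le\varepsilon,
\]
hence $\|C_n\int_{\aaa}\xi(s)\,ds\|\le\varepsilon$ for every $\aaa\subset\bbb$, so the supremum over $\aaa$ is at most $\varepsilon$. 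Plugging into the displayed bound from Lemma~\ref{lminfx} gives $\|\int_{\bbb}f_n(x)\xi(x)\,dx\|\le 16\varepsilon$ for all $n\ge N$, which is exactly convergence in probability to $0$.

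The only mild subtlety — and the one step worth spelling out rather than waving through — is the uniform passage from ``bounded in probability'' to ``$\sup_{\aaa}\|C_n\int_{\aaa}\xi\|\to0$'': one must pick the level $M$ \emph{first} (uniformly in $\aaa$, which Lemma~\ref{lmbipr} provides) and only then shrink $C_n$, so that a single $N$ works simultaneously for all $\aaa\subset\bbb$. Everything else is a direct citation of Lemmas~\ref{lmbipr} and~\ref{lminfx} and the elementary monotonicity of $\pr$. I do not anticipate a genuine obstacle here; the proof is short.

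\begin{proof}
Put $C_n=\sup_{x\in\bbb}|f_n(x)|$; by assumption $C_n\to 0$. If $C_n=0$ then $\int_{\bbb}f_n(x)\xi(x)\,dx=0$, so we may assume $C_n>0$. Fix $\varepsilon>0$. By Lemma~\ref{lmbipr}, the family $\bigl\{\int_{\aaa}\xi(s)\,ds:\aaa\subset\bbb\bigr\}$ is bounded in probability, so there is $M>0$ with
\[
\pr\Bigl\{\Bigl|\int_{\aaa}\xi(s)\,ds\Bigr|>M\Bigr\}\le\varepsilon\quad\text{for all }\aaa\subset\bbb.
\]
Choose $N$ so that $n\ge N$ implies $C_n\le 1$ and $C_nM\le\varepsilon$. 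Then for $n\ge N$ and every $\aaa\subset\bbb$,
\[
\pr\Bigl\{\Bigl|C_n\int_{\aaa}\xi(s)\,ds\Bigr|>\varepsilon\Bigr\}\le\pr\Bigl\{\Bigl|\int_{\aaa}\xi(s)\,ds\Bigr|>M\Bigr\}\le\varepsilon,
\]
hence $\bigl\|C_n\int_{\aaa}\xi(s)\,ds\bigr\|\le\varepsilon$ and therefore $\sup_{\aaa\subset\bbb}\bigl\|C_n\int_{\aaa}\xi(s)\,ds\bigr\|\le\varepsilon$. Since $f_n$ is uniformly continuous on~$\bbb$ with $|f_n(x)|\le C_n$, Lemma~\ref{lminfx} yields
\[
\Bigl\|\int_{\bbb}f_n(x)\xi(x)\,dx\Bigr\|\le 16\sup_{\aaa\subset\bbb}\Bigl\|C_n\int_{\aaa}\xi(s)\,ds\Bigr\|\le 16\varepsilon
\]
for all $n\ge N$. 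As $\varepsilon>0$ was arbitrary, $\int_{\bbb}f_n(x)\xi(x)\,dx\rp 0$ as $n\to\infty$.
\end{proof}
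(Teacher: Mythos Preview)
Your proof is correct and follows exactly the approach the paper takes: the paper's proof is the single line ``The statement follows from Lemmas~\ref{lmbipr} and~\ref{lminfx},'' and you have simply spelled out the routine details of how boundedness in probability together with $C_n\to 0$ forces the right-hand side of the Lemma~\ref{lminfx} bound to vanish.
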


\begin{proof}
The statement follows from Lemmas~\ref{lmbipr} and~\ref{lminfx}.
\end{proof}

\begin{lem}\label{lmifxn}
Let $\xi$ be integrable on an unbounded set $\tilde{\bbb}$ in improper sense, $f:\tilde{\bbb}\to\rr$ be a deterministic
bounded uniformly continuous on~$\tilde{\bbb}$ function. Then $f\xi$ is integrable on $\tilde{\bbb}$ in improper sense.
\end{lem}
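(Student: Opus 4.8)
The plan is to verify the two defining requirements of improper integrability for $f\xi$: that $f\xi$ is integrable on each exhausting set, and that the limit in probability ${\rm p}\lim_{j}\int_{\bbb^{(j)}}f(x)\xi(x)\,dx$ exists and is independent of the exhausting sequence. The first is immediate, since a function uniformly continuous on $\tilde\bbb$ is uniformly continuous on each bounded Jordan measurable $\aaa\subset\tilde\bbb$, so Lemma~\ref{lmifxi} makes $f\xi$ integrable on every such $\aaa$ (and on each $\bbb^{(j)}$); throughout I will also use the elementary additivity $\int_{E_1\cup E_2}\eta\,dx=\int_{E_1}\eta\,dx+\int_{E_2}\eta\,dx$ for non-overlapping Jordan measurable $E_1,E_2$, which follows from the Riemann-sum definition by amalgamating partitions.

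The core of the argument is a ``vanishing tails'' estimate for $\xi$ itself: for every $\varepsilon>0$ there is an $N$ with $\bigl\|\int_{\aaa}\xi(x)\,dx\bigr\|<\varepsilon$ for all bounded Jordan measurable $\aaa\subset\tilde\bbb\setminus\bbb^{(N)}$. I would prove this by contradiction. If it fails, there are $\varepsilon_0>0$ and bounded Jordan measurable $\aaa_j\subset\tilde\bbb\setminus\bbb^{(j)}$ with $\bigl\|\int_{\aaa_j}\xi\,dx\bigr\|\ge\varepsilon_0$; since $\aaa_j$ does not overlap $\bbb^{(j)}$, additivity gives $\int_{\bbb^{(j)}\cup\aaa_j}\xi\,dx=\int_{\bbb^{(j)}}\xi\,dx+\int_{\aaa_j}\xi\,dx$, and as $\int_{\bbb^{(j)}}\xi\,dx\to\int_{\tilde\bbb}\xi\,dx=:L$, the sets $E_j:=\bbb^{(j)}\cup\aaa_j$ satisfy $\bigl\|\int_{E_j}\xi\,dx-L\bigr\|\ge\varepsilon_0/2$ for $j$ large. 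Now I would splice the two kinds of sets: choose indices $j_1<m_1<j_2<m_2<\cdots$ so that each $\bbb^{(m_k)}$ contains the bounded set $E_{j_k}$ and $\bigl\|\int_{\bbb^{(m_k)}}\xi\,dx-L\bigr\|<\varepsilon_0/4$, and set $D_{2k-1}=E_{j_k}$, $D_{2k}=\bbb^{(m_k)}$. One checks $D_1\subset D_2\subset\cdots$, $\bigcup_n D_n=\tilde\bbb$ (the $\bbb^{(m_k)}$ alone already exhaust $\tilde\bbb$), and that~\eqref{eqbbbj} holds, so $\{D_n\}$ is an admissible exhausting sequence; by the postulated independence of the exhaustion, $\int_{D_n}\xi\,dx\to L$, which contradicts $\bigl\|\int_{D_{2k-1}}\xi\,dx-L\bigr\|\ge\varepsilon_0/2$. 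Setting this interleaving up carefully — in particular verifying monotonicity and~\eqref{eqbbbj} — is the main obstacle.

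Granting the tails estimate, both remaining points are routine. For convergence, fix $\varepsilon>0$, take the corresponding $N$, and for $j\ge i\ge N$ write $\int_{\bbb^{(j)}}f\xi\,dx-\int_{\bbb^{(i)}}f\xi\,dx=\int_{\bbb^{(j)}\setminus\bbb^{(i)}}f\xi\,dx$; every Jordan measurable $\aaa\subset\bbb^{(j)}\setminus\bbb^{(i)}$ lies in $\tilde\bbb\setminus\bbb^{(N)}$, so Lemma~\ref{lminfx} applied on $\bbb^{(j)}\setminus\bbb^{(i)}$ (legitimate by Lemma~\ref{lmseta}, with $|f|\le C$ and $f$ uniformly continuous there) yields $\bigl\|\int_{\bbb^{(j)}\setminus\bbb^{(i)}}f\xi\,dx\bigr\|\le 16\sup_{\aaa}\bigl\|C\int_\aaa\xi\,dx\bigr\|\le 16\lceil C\rceil\varepsilon$ by subadditivity of the quasi-norm. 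Hence $\bigl(\int_{\bbb^{(j)}}f\xi\,dx\bigr)_j$ is Cauchy in probability and, by completeness of $L_0$, converges to some $L_f$. For independence, given another exhausting sequence $\{D_n'\}$, the same estimate applies to the tails $\tilde\bbb\setminus D_n'$ (each of which lies inside $\tilde\bbb\setminus\bbb^{(N)}$ once $n$ is large, since the bounded set $\bbb^{(N)}$ is eventually contained in $D_n'$), so $\int_{D_n'}f\xi\,dx$ is Cauchy; interleaving $\{\bbb^{(j)}\}$ and $\{D_n'\}$ into a third admissible exhausting sequence — along which the integral is again Cauchy, hence convergent — forces that limit, and therefore both subsequential limits, to equal $L_f$.
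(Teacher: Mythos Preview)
Your proof is correct and follows essentially the same approach as the paper: apply Lemma~\ref{lminfx} to bound $\bigl\|\int_{\bbb^{(j)}\setminus\bbb^{(i)}}f\xi\,dx\bigr\|$ by $16\sup_{\aaa\subset\bbb^{(j)}\setminus\bbb^{(i)}}\bigl\|C\int_\aaa\xi\,dx\bigr\|$, and argue by contradiction that if this supremum does not vanish one can splice the offending sets into a new exhausting sequence along which $\int\xi$ fails to be Cauchy. Your version is simply more explicit---you isolate the ``vanishing tails'' property of $\xi$ as a separate step and spell out the interleaving and the independence of the limit, whereas the paper compresses all of this into two sentences.
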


\begin{proof}
For $\bbb^{(j)}\uparrow\tilde{\bbb}$ and $|f(x)|\le C$ Lemma~\ref{lminfx} implies
\begin{equation}\label{eqifxn}
\Bigl\|\int_{\bbb^{(j)}\setminus\bbb^{(i)}} f(x)\xi(x)\,dx\Bigr\|\le
16\sup_{\aaa\subset(\bbb^{(j)}\setminus\bbb^{(i)})}\Bigl\|C\int_{\aaa}\xi(x)\,dx\Bigr\| .
\end{equation}
If the left hand side of~\eqref{eqifxn} does not tend to 0 as~$i,\ j\to\infty$, then we can construct a sequence of
bounded sets~$\ccc^{(j)}\uparrow\tilde{\bbb}$ such that the sequence $\int_{\ccc_j}\xi(x)\,dx,\ j\ge 1$, is
non-fundamental.
\end{proof}

\begin{lem}\label{lmfnxr}
Let $\xi$ be integrable on unbounded set $\tilde{\bbb}$ in improper sense, $f_n:\tilde{\bbb}\to\rr$ be a deterministic
bounded uniformly continuous on~$\tilde{\bbb}$ functions, $\sup_{n\ge 1,x\in\tilde{\bbb}}|f_n(x)|=C<\infty$,
$\sup_{x\in \bbb}|f_n(x)|\to 0,\ n\to\infty$ for all bounded $\bbb\subset\tilde{\bbb}$. Then
\[
\int_{\tilde{\bbb}} f_n(x)\xi(x)\,dx\rp 0,\quad n\to\infty.
\]
\end{lem}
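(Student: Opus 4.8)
The plan is to reduce the improper-integral statement to a combination of the bounded-domain result (Lemma~\ref{lmfnxi}) and a uniform tail estimate coming from Lemma~\ref{lminfx}. Fix $\varepsilon>0$. Since $\xi$ is integrable on $\tilde{\bbb}$ in the improper sense, the set of values $\{\int_{\aaa}\xi(x)\,dx:\ \aaa\subset\bbb^{(j)},\ j\ge1\}$ should be bounded in probability, uniformly over all exhaustive $\bbb^{(j)}$; this follows from Lemma~\ref{lmbipr} applied on each $\bbb^{(j)}$ together with the existence of the improper limit (the partial integrals $\int_{\bbb^{(j)}}\xi$ form a convergent, hence bounded, sequence, and Lemma~\ref{lmbipr} controls the sub-integrals $\int_{\aaa}\xi$ for $\aaa\subset\bbb^{(j)}$ in terms of that). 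Consequently, for the given constant $C=\sup_{n,x}|f_n(x)|$, there is a $K$ with $\|C\int_{\aaa}\xi(x)\,dx\|\le K$ for every admissible $\aaa\subset\tilde{\bbb}$.

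Next I would split the domain. Using the tail estimate built into the proof of Lemma~\ref{lmifxn}, namely
\[
\Bigl\|\int_{\bbb^{(j)}\setminus\bbb^{(i)}} f_n(x)\xi(x)\,dx\Bigr\|\le 16\sup_{\aaa\subset(\bbb^{(j)}\setminus\bbb^{(i)})}\Bigl\|C\int_{\aaa}\xi(x)\,dx\Bigr\|,
\]
and the fact that $\int_{\bbb^{(j)}}\xi$ is fundamental in probability, I would choose a fixed index $i_0$ so that $\|\int_{\tilde{\bbb}\setminus\bbb^{(i_0)}}f_n(x)\xi(x)\,dx\|<\varepsilon/2$ for all $n$ simultaneously — the bound on the right is independent of $n$ because it only uses $C$, not $f_n$ itself. (More carefully, one works with $\int_{\tilde{\bbb}}f_n\xi=\int_{\bbb^{(i_0)}}f_n\xi+\int_{\tilde{\bbb}\setminus\bbb^{(i_0)}}f_n\xi$, where the improper integral over the complement is the $\pr$-limit of $\int_{\bbb^{(j)}\setminus\bbb^{(i_0)}}f_n\xi$ and inherits the same bound.) On the fixed bounded set $\bbb^{(i_0)}$, the hypothesis $\sup_{x\in\bbb^{(i_0)}}|f_n(x)|\to0$ lets me apply Lemma~\ref{lmfnxi} to conclude $\int_{\bbb^{(i_0)}}f_n(x)\xi(x)\,dx\rp 0$, so $\|\int_{\bbb^{(i_0)}}f_n\xi\|<\varepsilon/2$ for $n$ large. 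Adding the two pieces via subadditivity of the quasi-norm gives $\|\int_{\tilde{\bbb}}f_n(x)\xi(x)\,dx\|<\varepsilon$ for all large $n$, which is the claim.

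The main obstacle I anticipate is making the tail bound genuinely uniform in $n$ and rigorously connecting the improper integral $\int_{\tilde{\bbb}\setminus\bbb^{(i_0)}}f_n\xi$ to the estimate of Lemma~\ref{lmifxn}: one must check that $\tilde{\bbb}\setminus\bbb^{(i_0)}$ is itself approximable by the exhaustive sets $\bbb^{(j)}\setminus\bbb^{(i_0)}$ in the sense of~\eqref{eqbbbj}, that $f_n\xi$ is integrable there in the improper sense (Lemma~\ref{lmifxn} gives this), and that passing to the $\pr$-limit in $j$ preserves the $n$-independent bound $16K$. The second, more technical point is justifying the uniform-in-$j$ boundedness in probability of $\{\int_{\aaa}\xi:\aaa\subset\bbb^{(j)}\}$; here one uses that a convergent-in-probability sequence is bounded in probability, and that Lemma~\ref{lmbipr}'s proof — choosing a single fine partition of $\tilde\bbb$ — can be run so that the controlling bound does not depend on which $\bbb^{(j)}$ the set $\aaa$ sits in. Once these are in place, the argument is a routine $\varepsilon/2+\varepsilon/2$ splitting.
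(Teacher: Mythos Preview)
Your direct $\varepsilon/2$-splitting is in principle a valid route, but there is a genuine gap at the tail step. The estimate from Lemma~\ref{lminfx} gives
\[
\Bigl\|\int_{\bbb^{(j)}\setminus\bbb^{(i_0)}} f_n(x)\xi(x)\,dx\Bigr\|\le 16\sup_{\aaa\subset\bbb^{(j)}\setminus\bbb^{(i_0)}}\Bigl\|C\int_\aaa\xi(x)\,dx\Bigr\|,
\]
and for your argument you need the right-hand side to be smaller than $\varepsilon/2$, uniformly in $j$, once $i_0$ is large. You appeal to the fundamentality of $\bigl\{\int_{\bbb^{(j)}}\xi\bigr\}_j$ and to a uniform version of Lemma~\ref{lmbipr}, but neither suffices: fundamentality only makes $\bigl\|\int_{\bbb^{(j)}\setminus\bbb^{(i_0)}}\xi\bigr\|$ small (the particular choice $\aaa=\bbb^{(j)}\setminus\bbb^{(i_0)}$), not the supremum over \emph{all} subsets $\aaa$; and Lemma~\ref{lmbipr} yields boundedness, not smallness. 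What is actually needed is
\[
\sup_{\text{bounded }\aaa\subset\tilde\bbb\setminus\bbb^{(i_0)}}\Bigl\|\int_\aaa\xi(x)\,dx\Bigr\|\to 0\quad(i_0\to\infty),
\]
and the way to obtain it is by contradiction: if this fails, one extracts disjoint bounded $\aaa_k\subset\tilde\bbb$ escaping to infinity with $\bigl\|\int_{\aaa_k}\xi\bigr\|>\delta$, and then inserts or omits the $\aaa_k$ in an exhaustive sequence to produce non-convergent partial integrals, contradicting improper integrability.

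That contradiction step is exactly what the paper's proof does, only organized more economically: rather than first isolating a uniform tail-smallness statement, the paper argues directly that if $\int_{\tilde\bbb}f_n\xi\not\rp 0$ then (since Lemma~\ref{lmfnxi} disposes of any fixed bounded piece) there exist bounded disjoint $\bbb_j\subset\tilde\bbb\cap\{|x|\ge j\}$ with $\bigl\|\int_{\bbb_j}f_{n_j}\xi\bigr\|>\varepsilon_0$, whereupon Lemma~\ref{lminfx} produces disjoint $\aaa_j$ with $\bigl\|C\int_{\aaa_j}\xi\bigr\|>\varepsilon_0/16$, contradicting integrability of $\xi$ on $\tilde\bbb$. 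So once you fill the gap your argument and the paper's collapse into one another; the paper simply avoids the detour through an explicit uniform tail estimate.
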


\begin{proof}
Suppose the lemma is false. Applying Lemma~\ref{lmfnxi}, one can find $\varepsilon_0>0$, subsequence $f_{n_j},\ j\ge
1$, and bounded disjoint sets $\bbb_j\subset(\tilde{\bbb}\cap\{|x|\ge j\})$ such that $\Bigl\|\int_{\bbb_j}
f_{n_j}(x)\xi(x)\,dx\Bigr\|> \varepsilon_0$. From Lemma~\ref{lminfx} it follows that there exist bounded disjoint sets
$\aaa_j\subset(\tilde{\bbb}\cap\{|x|\ge j\})$ such that $\Bigl\|C \int_{\aaa_j} \xi(x)\,dx\Bigr\|> (\varepsilon_0 /
16)$. This contradicts the integrability of $\xi$ on $\tilde{\bbb}$.
\end{proof}

Note that the stochastic continuity of $\xi$ does not imply the integrability.

\begin{exa} Consider $\bbb=[0, 1]$, $\xi_k(\omega)=5^k {\bf 1}_{F_k},\ k\ge 1$, were $\pr(F_k)=\dfrac{1}{k},\ F_k$
are independent. Set
\begin{multline*}
\xi(0)=0,\quad \xi(x)=\xi_k,\ 2^{-2k-1}\le x\le 2^{-2k},\\
 \xi(x)=2^{2k+2}((2^{-2k-1}-x)\xi_{k+1}+(x-2^{-2k-2})\xi_k),\
 2^{-2k-2}\le x\le 2^{-2k-1} .
\end{multline*}
Taking all possible finite unions $\aaa=\cup_k [2^{-2k-1}, 2^{-2k}]$, we see that the values $\int_{\aaa}\xi(x)\,dx$
are not bounded in probability. By Lemma~\ref{lmbipr}, $\xi$ is not integrable on~$[0, 1]$.
\end{exa}

\section{Interchange of the order of integration}
\label{scintc} \setcounter{equation}{0}

\begin{thm}\label{thcisu} Let $\mu$ be a stochastic measure on~$(\sss, \bb)$,
$\bbb\subset\rr^d$ be a bounded set. Assume that $h(x, s):\bbb\times\sss\to\rr$ is a measurable deterministic function
which is Riemann integrable on~$\bbb$ for each fixed~$s$, and $|h(x, s)|\le g(s)$, where $g:\sss\to\rr$ is integrable
on~$\sss$ with respect to~$d\mu(s)$. Then the random function $\xi(x)=\int_{\sss} h(x, s)\,d\mu(s)$ is integrable on
$\bbb$, and
\begin{equation}\label{eqcisu}
\int_{\bbb}\,dx\int_{\sss} h(x, s)\,d\mu(s)=\int_{\sss}\,d\mu(s)\int_{\bbb} h(x, s)\,dx .
\end{equation}
\end{thm}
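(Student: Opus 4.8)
The plan is to show both sides of~\eqref{eqcisu} are limits of the same double sum, coming from partitioning $\bbb$ into small pieces and approximating the $d\mu$-integrals by $\mu$-sums in an appropriate sense. First I would fix a partition $\bbb=\cup_{1\le k\le k_0}\bbb_k$ with $\diam\bbb_k<\delta$ and points $x_k\in\bbb_k$, and consider the integral sum $\sum_k\xi(x_k)\mm(\bbb_k)=\sum_k\mm(\bbb_k)\int_{\sss}h(x_k,s)\,d\mu(s)=\int_{\sss}\bigl(\sum_k\mm(\bbb_k)h(x_k,s)\bigr)d\mu(s)$, where the last equality is linearity of the deterministic integral against $\mu$. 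Note $\phi_\delta(s):=\sum_k\mm(\bbb_k)h(x_k,s)$ is exactly a Riemann sum for $\int_{\bbb}h(x,s)\,dx$, so for each fixed $s$ it converges to $\psi(s):=\int_{\bbb}h(x,s)\,dx$ as $\delta\to 0$; moreover $|\phi_\delta(s)|\le\mm(\bbb)g(s)$ and $|\psi(s)|\le\mm(\bbb)g(s)$, so both are integrable against $d\mu$. Hence the right-hand side of~\eqref{eqcisu} makes sense, and it remains to prove $\int_{\sss}\phi_\delta\,d\mu\rp\int_{\sss}\psi\,d\mu$ as $\diam$ of the partition tends to $0$.

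The key tool is the dominated convergence theorem for the integral with respect to a stochastic measure (\cite[Proposition 7.1.1]{kwawoy} or~\cite[Corollary 1.2]{radmon}): if $g_n\to g_0$ pointwise and $|g_n|\le G$ with $G$ integrable against $d\mu$, then $\int_{\sss}g_n\,d\mu\rp\int_{\sss}g_0\,d\mu$. The only subtlety is that here the approximation is indexed by partitions, not by a sequence, so I would argue by the standard subsequence principle: it suffices to show that every sequence of partitions with $\max_k\diam\bbb_{kn}\to 0$ admits a further subsequence along which the corresponding $\phi_{\delta_n}\to\psi$ pointwise $d\mu$-a.e.\ in the sense needed to invoke dominated convergence. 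In fact, since $h(\cdot,s)$ is Riemann integrable on $\bbb$ for \emph{each} $s$, the convergence $\phi_{\delta_n}(s)\to\psi(s)$ holds for \emph{every} $s$ along \emph{any} such sequence of partitions (this is precisely the definition of the Riemann integral used, via~\cite{nikol}). With the uniform bound $\mm(\bbb)g(s)$ in hand, dominated convergence gives $\int_{\sss}\phi_{\delta_n}\,d\mu\rp\int_{\sss}\psi\,d\mu$. This shows the integral sums of $\xi$ converge in probability to $\int_{\sss}\psi\,d\mu=\int_{\sss}d\mu(s)\int_{\bbb}h(x,s)\,dx$, independently of the partition sequence, which is exactly the assertion that $\xi$ has an integral on $\bbb$ equal to the right-hand side of~\eqref{eqcisu}.

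Finally I would verify that $\xi$ is \emph{integrable} in the sense of Definition~\ref{dfintr}, i.e.\ that $\{\xi(x),\ x\in\bbb\}$ is bounded in probability. Since $|h(x,s)|\le g(s)$ for all $x$, each $\xi(x)=\int_{\sss}h(x,s)\,d\mu(s)$ is dominated in a uniform way; boundedness in probability of the family follows from the same domination: taking any sequence $x_m\in\bbb$, the functions $h(x_m,\cdot)$ are all bounded by $g$, so $\|n^{-1}\xi(x_m)\|=\|\int_{\sss}n^{-1}h(x_m,s)\,d\mu(s)\|\to 0$ uniformly in $m$ as $n\to\infty$ (again by dominated convergence, since $n^{-1}h(x_m,s)\to 0$ with dominating function $g$), which rules out an unbounded subfamily. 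This completes the proof.

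I expect the main obstacle to be the passage from sequential dominated convergence to the net/partition formulation — one must be careful that the dominated convergence theorem for $d\mu$ is only stated for sequences, so the argument has to be phrased through "every sequence of partitions" and the pointwise-in-$s$ convergence of the Riemann sums $\phi_{\delta_n}(s)\to\psi(s)$, which fortunately holds for every $s$ by the hypothesis that $h(\cdot,s)$ is Riemann integrable. A secondary technical point is confirming that $\phi_\delta$ and $\psi$ are genuinely $d\mu$-integrable; this is immediate from boundedness plus the domination by $\mm(\bbb)g$ and the fact that bounded measurable functions dominated by a $d\mu$-integrable function are $d\mu$-integrable.
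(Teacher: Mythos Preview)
Your proposal is correct and follows essentially the same line as the paper: write the Riemann sum of $\xi$ as $\int_{\sss} g_n(s)\,d\mu(s)$ with $g_n(s)=\sum_k h(x_{kn},s)\mm(\bbb_{kn})\to\int_{\bbb}h(x,s)\,dx$ pointwise in $s$, then invoke dominated convergence for the stochastic integral with dominating function $\mm(\bbb)g$. The only cosmetic difference is that for boundedness in probability of $\{\xi(x)\}$ the paper appeals to inequality~\eqref{eqvaht} and results in~\cite{radmon}, whereas you extract it directly from dominated convergence; also, your worry about nets versus sequences is unnecessary since the Riemann integral in Definition~\ref{dfintr} is already defined via sequences of partitions.
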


\begin{proof} From the inequality $|h(x, s)|\le g(s)$ and~\eqref{eqvaht} it follows that values of $\xi$ are bounded in
probability (see Lemma~1.1 and Theorem~1.3~\cite{radmon}). Integral sums of $\int_{\bbb} \xi(x)\,dx$ have the form
\begin{multline*}
\sum_{1\le k\le k_n} \mm(\bbb_{kn}) \int_{\sss} h(x_{kn}, s)\,d\mu(s)=
\int_{\sss} g_n(s)\,d\mu(s),\\
g_n(s)=\sum_{1\le k\le k_n} h(x_{kn}, s)\mm(\bbb_{kn})\to \int_{\bbb}  h(x, s)\,dx .
\end{multline*}
Boundedness condition of $h$ and the analogue of the Lebesgue theorem~\cite[Proposition 7.1.1]{kwawoy} for the integral
with respect to $d\mu(s)$ imply the statement.
\end{proof}

\begin{nas}\label{thcisur}
Let $\mu$ be a stochastic measure on~$(\sss, \bb)$, $\tilde{\bbb}\subset\rr^d$ be an unbounded set. Assume that $h(x,
s):\tilde{\bbb}\times\sss\to\rr$ is a measurable deterministic function which is Riemann integrable on~$\tilde{\bbb}$
in improper sense for each fixed~$s$, and $|h(x, s)|\le g(s),\ \int_{\tilde{\bbb}}|h(x, s)|\,dx=g_1(s)$, where $g,\
g_1:\sss\to\rr$ are integrable on~$\sss$ with respect to~$d\mu(s)$. Then the random function $\xi(x)=\int_{\sss} h(x,
s)\,d\mu(s)$ is integrable on~$\tilde{\bbb}$ in improper sense, and
\begin{equation}\label{eqcisun}
\int_{\tilde{\bbb}}\,dx\int_{\sss} h(x, s)\,d\mu(s)=\int_{\sss}\,d\mu(s)\int_{\tilde{\bbb}} h(x, s)\,dx .
\end{equation}
\end{nas}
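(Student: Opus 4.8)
The plan is to reduce Corollary~\ref{thcisur} to the bounded case already established in Theorem~\ref{thcisu} together with the improper-integral machinery from Section~\ref{scdfri}. First I would fix a sequence of exhaustive sets $\bbb^{(j)}\uparrow\tilde{\bbb}$ in the sense of~\eqref{eqbbbj}. For each fixed $j$, Theorem~\ref{thcisu} applies on the bounded set $\bbb^{(j)}$ (using the bound $|h(x,s)|\le g(s)$ and integrability of $g$), so $\xi(x)=\int_{\sss}h(x,s)\,d\mu(s)$ is integrable on each $\bbb^{(j)}$ and
\[
\int_{\bbb^{(j)}}\xi(x)\,dx=\int_{\sss}\,d\mu(s)\int_{\bbb^{(j)}}h(x,s)\,dx .
\]
It then remains to pass to the limit $j\to\infty$ on both sides and to check independence of the limit from the choice of $\bbb^{(j)}$.

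For the right-hand side, set $h_j(s)=\int_{\bbb^{(j)}}h(x,s)\,dx$. Since $h(\cdot,s)$ is improperly Riemann integrable on $\tilde{\bbb}$ with $\int_{\tilde{\bbb}}|h(x,s)|\,dx=g_1(s)$, we have $h_j(s)\to\int_{\tilde{\bbb}}h(x,s)\,dx$ pointwise in $s$, and $|h_j(s)|\le g_1(s)$ with $g_1$ integrable with respect to $d\mu$. Hence the analogue of the Lebesgue dominated convergence theorem for the deterministic integral with respect to $d\mu$ (\cite[Proposition 7.1.1]{kwawoy} or \cite[Corollary 1.2]{radmon}) gives
\[
\int_{\sss} h_j(s)\,d\mu(s)\rp\int_{\sss}\,d\mu(s)\int_{\tilde{\bbb}}h(x,s)\,dx .
\]
This is the clean half of the argument.

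For the left-hand side I must show that $\xi$ is integrable on $\tilde{\bbb}$ in the improper sense, i.e.\ that $\int_{\bbb^{(j)}}\xi(x)\,dx$ converges in probability and that the limit does not depend on the exhausting sequence. Convergence follows from the display above, since the right-hand side converges. For independence of the sequence, I would take two exhausting sequences $\bbb^{(j)}$ and $\widetilde{\bbb}^{(j)}$, interleave them into a single exhausting sequence (as in the mixing argument used to justify~\eqref{eqints} and in the proof of the earlier lemma on $\tilde{\aaa}\subset\tilde{\bbb}$), and observe that the common limit forces the two original limits to agree; alternatively one invokes that the right-hand side limit $\int_{\sss}\,d\mu(s)\int_{\tilde{\bbb}}h(x,s)\,dx$ is manifestly independent of the choice of $\bbb^{(j)}$, so the left-hand side must be too. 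Combining the two halves yields~\eqref{eqcisun}.

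The main obstacle is verifying the hypotheses of the dominated convergence theorem for the $d\mu$-integral precisely: one needs $s\mapsto h_j(s)=\int_{\bbb^{(j)}}h(x,s)\,dx$ to be measurable and the pointwise convergence $h_j(s)\to g_1$-type control to be uniform enough in the sense required by~\cite[Proposition 7.1.1]{kwawoy}. Measurability of $h_j$ follows because the Riemann integral over $\bbb^{(j)}$ is a pointwise limit of integral sums $\sum_k h(x_{kn},s)\mm(\bbb^{(j)}_{kn})$, each measurable in $s$; and the domination by $g_1$ is exactly the extra hypothesis $\int_{\tilde{\bbb}}|h(x,s)|\,dx=g_1(s)$ that distinguishes the corollary from the theorem. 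Everything else is a routine transcription of the bounded-case proof plus the improper-integral definitions already in place.
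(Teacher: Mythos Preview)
Your proposal is correct and follows exactly the paper's approach: apply Theorem~\ref{thcisu} on each exhaustive set $\bbb^{(j)}$, then pass to the limit on the $d\mu$-side via the analogue of the Lebesgue dominated convergence theorem using the dominating function $g_1$. The paper's proof is two lines and omits precisely the details you supply (measurability of $h_j$, the domination $|h_j|\le g_1$, and independence of the exhausting sequence), so your write-up is a faithful expansion of it.
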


\begin{proof}
For bounded sets $\bbb^{(j)}\uparrow \tilde{\bbb}$, Theorem~\ref{thcisu} implies
\[
\int_{\bbb^{(j)}}\,dx\int_{\sss} h(x, s)\,d\mu(s)= \int_{\sss}\,d\mu(s)\int_{\bbb^{(j)}} h(x, s)\,dx .
\]
Further, we use the analogue of the Lebesgue theorem and integrability of~$g_1$.
\end{proof}

\begin{thm}\label{thmxss}
Let $\bbb\subset\rr^{d},\ \sss\subset\rr^{m}$ be a bounded sets, random function $\xi(x, s):\bbb\times\sss\to L_0$ be
integrable on~$\bbb\times\sss$ with respect to~$dx\times ds$ and be integrable on~$\sss$ with respect to~$ds$ for each
fixed~$x$. Then
\begin{equation}\label{eqmxst}
\int_{\bbb\times\sss} \xi(x, s)\,dx\times ds=\int_{\bbb}\,dx\int_{\sss} \xi(x, s)\,ds .
\end{equation}
\end{thm}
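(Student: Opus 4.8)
The plan is to run a Fubini-type comparison at the level of Riemann sums, using Lemma~\ref{lmseta} once on $\bbb\times\sss$ and, for finitely many fixed points, on $\sss$. Write $\eta(x)=\int_{\sss}\xi(x,s)\,ds$; this random function is well defined on $\bbb$ by the second hypothesis, and the right-hand side of~\eqref{eqmxst} is by definition $\int_{\bbb}\eta(x)\,dx$. So the content of the theorem is that $\eta$ has an integral on $\bbb$ and that integral equals $\int_{\bbb\times\sss}\xi(x,s)\,dx\times ds$. The idea is to show that every iterated Riemann sum $\sum_k\eta(x_k)\mm(\bbb_k)$ is close, in quasi-norm, to a genuine Riemann sum of $\xi$ over the product partition $\{\bbb_k\times\sss_j\}$ of $\bbb\times\sss$, and the latter is close to $\int_{\bbb\times\sss}\xi\,dx\times ds$.

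Fix $\varepsilon>0$. First I would apply Lemma~\ref{lmseta} to $\xi$ on $\bbb\times\sss$ (with $\aaa=\bbb\times\sss$): there is $\delta>0$ such that every integral sum of $\xi$ over a partition of $\bbb\times\sss$ into pieces of diameter less than $\delta$ lies within $\varepsilon/2$ of $\int_{\bbb\times\sss}\xi(x,s)\,dx\times ds$. Now take an arbitrary partition $\bbb=\cup_{1\le k\le k_0}\bbb_k$ with $\diam\bbb_k<\delta/2$ and arbitrary points $x_k\in\bbb_k$, and set $\Sigma=\sum_{k}\eta(x_k)\mm(\bbb_k)$; the goal is $\|\Sigma-\int_{\bbb\times\sss}\xi\,dx\times ds\|<\varepsilon$. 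Since there are only $k_0$ points $x_k$ and each $\xi(x_k,\cdot)$ is integrable on $\sss$, applying Lemma~\ref{lmseta} to each of them and taking the minimum of the resulting radii gives a single $\rho\in(0,\delta/2]$ such that for every $k$ and every partition $\sss=\cup_{j}\sss_j$ with $\diam\sss_j<\rho$ one has $\|\eta(x_k)-\sum_{j}\xi(x_k,s_j)\mm(\sss_j)\|<\varepsilon/(2k_0(\mm(\bbb)+1))$. Fixing one such partition $\{\sss_j\}$ with points $s_j\in\sss_j$, multiplying the $k$-th estimate by $\mm(\bbb_k)\le\mm(\bbb)$ (using the elementary bound $\|c\zeta\|\le(|c|+1)\|\zeta\|$, which follows from $\|c'\zeta\|\le\|\zeta\|$ for $|c'|\le1$ and the triangle inequality for $\|\cdot\|$), and summing over $k$, we get $\|\Sigma-\sum_{k,j}\xi(x_k,s_j)\mm(\bbb_k)\mm(\sss_j)\|<\varepsilon/2$.

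The key observation is then purely combinatorial: $\mm(\bbb_k)\mm(\sss_j)=\mm(\bbb_k\times\sss_j)$, the sets $\bbb_k\times\sss_j$ are non-overlapping, cover $\bbb\times\sss$, and each has diameter less than $\delta$ (since $\diam(\bbb_k\times\sss_j)\le\sqrt{(\diam\bbb_k)^2+(\diam\sss_j)^2}<\delta$). Hence $\sum_{k,j}\xi(x_k,s_j)\mm(\bbb_k\times\sss_j)$ is a legitimate integral sum for $\xi$ on $\bbb\times\sss$ with mesh less than $\delta$, so by the choice of $\delta$ it is within $\varepsilon/2$ of $\int_{\bbb\times\sss}\xi(x,s)\,dx\times ds$. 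Combining the two estimates via the triangle inequality yields $\|\Sigma-\int_{\bbb\times\sss}\xi(x,s)\,dx\times ds\|<\varepsilon$. Since $\varepsilon>0$ and the partition $\{\bbb_k\}$ of $\bbb$ together with the points $x_k$ were arbitrary subject only to $\diam\bbb_k<\delta/2$, this shows that the iterated Riemann sums converge in probability to the fixed random variable $\int_{\bbb\times\sss}\xi(x,s)\,dx\times ds$ along every refining sequence of partitions of $\bbb$; hence $\eta$ has an integral on $\bbb$ and~\eqref{eqmxst} holds.

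I do not expect a serious obstacle here: the only points requiring care are the non-homogeneity of the quasi-norm under multiplication by the contents $\mm(\bbb_k)$, which is absorbed by the bound $\|c\zeta\|\le(|c|+1)\|\zeta\|$, and the order of quantifiers — $\delta$ must be extracted from the $\bbb\times\sss$-integrability first, then the partition of $\bbb$ and the points $x_k$ are chosen, and only afterwards the auxiliary partition of $\sss$, whose fineness $\rho$ is permitted to depend on the finitely many already-chosen $x_k$. It is precisely the fact that the $\sss$-partition enters only as an intermediary (the quantity $\Sigma$ itself is independent of it) that makes the argument close cleanly; this is the usual structure of a Fubini proof, adapted to the quasi-norm setting.
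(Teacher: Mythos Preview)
Your argument is correct and follows essentially the same route as the paper's proof: approximate the iterated sum $\sum_k \mm(\bbb_k)\int_{\sss}\xi(x_k,s)\,ds$ by the double sum $\sum_{k,i}\mm(\bbb_k)\mm(\sss_i)\xi(x_k,s_i)$, and observe that the latter is an integral sum for $\int_{\bbb\times\sss}\xi\,dx\times ds$. The paper states this in three sentences without explicit $\varepsilon$--$\delta$ bookkeeping; your version spells out the order of quantifiers, the invocation of Lemma~\ref{lmseta}, and the quasi-norm scaling bound $\|c\zeta\|\le(|c|+1)\|\zeta\|$, but the underlying idea is identical.
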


\begin{proof}
Integral sums of integral with respect to~$dx$ in~\eqref{eqmxst} has the form
\begin{equation}\label{eqmxss}
\sum_{1\le k\le k_0} \mm(\bbb_{k})\int_{\sss} \xi(x_{k}, s)\,ds .
\end{equation}
Each integral in~\eqref{eqmxss} may be approximated by sums of the form~$\sum_{1\le i\le i_0} \mm(\sss_{i})\xi(x_{k},
s_{i})$. Thus, the sums
\[
\sum_{1\le k\le k_0} \sum_{1\le i\le i_0} \mm(\bbb_{k})\mm(\sss_{i})\xi(x_{k}, s_{i}) .
\]
will approximate the right hand side of~\eqref{eqmxss}. But they are the integral sums for the integral with respect
to~$dx\times ds$ in~\eqref{eqmxst}, and will be close to the left hand side of~\eqref{eqmxst} for sufficiently small
diameters of $\bbb_{k}\times \sss_{i}$.
\end{proof}

\begin{nas}\label{crinsr}
Let $\sss\subset\rr^{m}$ be a bounded set, $\tilde{\bbb}\subset\rr^d$ be an unbounded set. Assume that the random
function $\xi(x, s):\tilde{\bbb}\times\sss\to L_0$ is integrable on~$\tilde{\bbb}\times\sss$ with respect to~$dx\times
ds$ in improper sense,  is integrable on~$\tilde{\bbb}$  with respect to~$dx$  in improper sense for each fixed~$s$,
and is integrable on~$\sss$  with respect to~$ds$ for each fixed~$x$. Then
\begin{equation}\label{eqinsr}
\int_{\tilde{\bbb}\times\sss} \xi(x, s)\,dx\times ds=\int_{\sss} ds\int_{\tilde{\bbb}} \xi(x,
s)\,dx=\int_{\tilde{\bbb}}\,dx\int_{\sss} \xi(x, s)\,ds .
\end{equation}
\end{nas}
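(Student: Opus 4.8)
The plan is to reduce~\eqref{eqinsr} to Theorem~\ref{thmxss} on the bounded pieces of an exhaustion of $\tilde{\bbb}$ and then pass to the limit; the two equalities turn out to be of quite different difficulty.

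First I would fix exhaustive sets $\bbb^{(j)}\uparrow\tilde{\bbb}$ as in~\eqref{eqbbbj}. Since $\sss$ is bounded, the $\bbb^{(j)}\times\sss$ are bounded Jordan sets and $\bbb^{(j)}\times\sss\uparrow\tilde{\bbb}\times\sss$ is an exhaustive sequence for $\tilde{\bbb}\times\sss$ (the second requirement in~\eqref{eqbbbj} for the product follows from the one for~$\tilde{\bbb}$). On each $\bbb^{(j)}\times\sss$ the hypotheses of Theorem~\ref{thmxss} hold in both orders of integration: $\xi$ is integrable on $\bbb^{(j)}\times\sss$ (a bounded subset of the set $\tilde{\bbb}\times\sss$ on which $\xi$ is improperly integrable), on $\sss$ for each fixed $x$, and on $\bbb^{(j)}$ for each fixed $s$ (the last because $\xi$ is improperly integrable on $\tilde{\bbb}$ for each fixed $s$). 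Writing $\eta(x)=\int_{\sss}\xi(x,s)\,ds$, $\zeta_j(s)=\int_{\bbb^{(j)}}\xi(x,s)\,dx$ and $\zeta(s)=\int_{\tilde{\bbb}}\xi(x,s)\,dx$, Theorem~\ref{thmxss} gives
\[
\int_{\bbb^{(j)}}\eta(x)\,dx=\int_{\bbb^{(j)}\times\sss}\xi(x,s)\,dx\times ds=\int_{\sss}\zeta_j(s)\,ds ,
\]
and in particular $\eta$ is integrable on each $\bbb^{(j)}$ and $\zeta_j$ on $\sss$.

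For the last equality in~\eqref{eqinsr} I would simply let $j\to\infty$: by improper integrability of $\xi$ on $\tilde{\bbb}\times\sss$ the middle term converges in probability to $\int_{\tilde{\bbb}\times\sss}\xi\,dx\times ds$, and the value of this limit does not depend on the exhaustion. Hence ${\rm p}\lim_j\int_{\bbb^{(j)}}\eta(x)\,dx$ exists and is independent of $\{\bbb^{(j)}\}$; together with integrability of $\eta$ on each $\bbb^{(j)}$ this says exactly that $\eta$ is integrable on $\tilde{\bbb}$ in the improper sense and that $\int_{\tilde{\bbb}}\,dx\int_{\sss}\xi(x,s)\,ds=\int_{\tilde{\bbb}\times\sss}\xi(x,s)\,dx\times ds$.

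The remaining equality is the one I expect to be the main obstacle, because here the inner integral is improper: we have $\int_{\sss}\zeta_j(s)\,ds=\int_{\bbb^{(j)}\times\sss}\xi\to\int_{\tilde{\bbb}\times\sss}\xi$ and $\zeta_j(s)\rp\zeta(s)$ for each fixed $s$, but the limit in $j$ has to be carried through $\int_{\sss}\,ds$, and the convergence $\zeta_j\to\zeta$ is in general \emph{not} uniform on $\sss$ (already for deterministic $\xi$), so no direct estimate is available. The route I would take is to prove that $\{\zeta_j\}_{j\ge1}$ is \emph{equi-integrable} on $\sss$, i.e.\ that a version of Lemma~\ref{lmseta} holds with $\delta$ independent of $j$. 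One identifies the sums $\sum_i\mm(\sss_i)\zeta_j(s_i)$ with integral sums of $\xi$ over $\bbb^{(j)}\times\sss$ via Theorem~\ref{thmxss}, fixes a large $m$, and splits each partition cell along $\partial\bbb^{(m)}$: the part lying in $\bbb^{(m)}\times\sss$ is controlled by Lemma~\ref{lmseta} on that fixed set, and the part lying in $(\bbb^{(j)}\setminus\bbb^{(m)})\times\sss$ by the smallness, for large $m$, of $\int_{(\bbb^{(j)}\setminus\bbb^{(m)})\times\sss}\xi$, which holds because $\int_{\bbb^{(j)}\times\sss}\xi$ is fundamental in probability; the delicate point is keeping all these estimates uniform in $j$. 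Given equi-integrability, a standard double-limit (Moore--Osgood type) argument shows the integral sums of $\zeta$ over $\sss$ are fundamental, so $\zeta$ has an integral on $\sss$, and passing to the limit gives $\int_{\sss}\zeta(s)\,ds={\rm p}\lim_j\int_{\sss}\zeta_j(s)\,ds=\int_{\tilde{\bbb}\times\sss}\xi(x,s)\,dx\times ds$; boundedness in probability of $\{\zeta(s):s\in\sss\}$, needed for integrability of $\zeta$ on $\sss$, is obtained similarly. Combined with the previous paragraph, this yields the full chain~\eqref{eqinsr}.
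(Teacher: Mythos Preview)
Your treatment of the last equality in~\eqref{eqinsr} is exactly the paper's: apply Theorem~\ref{thmxss} on each $\bbb^{(j)}\times\sss$ and let $j\to\infty$, using that the product integral converges by assumption.

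For the first equality the paper takes a shorter and more direct route than your equi-integrability plus Moore--Osgood scheme. Rather than proving a uniform-in-$j$ version of Lemma~\ref{lmseta} for the family $\{\zeta_j\}$, the paper argues by contradiction directly on the Riemann sums of $\zeta(s)=\int_{\tilde{\bbb}}\xi(x,s)\,dx$: a Riemann sum $\sum_i\mm(\sss_i)\zeta(s_i)$ involves only finitely many tags $s_i$, so each $\zeta(s_i)$ can be replaced by $\int_{\bbb^{(j)}}\xi(x,s_i)\,dx$ for $j$ large enough, and each of these in turn by a Riemann sum over a common partition $\{\bbb^{(j)}_k\}$; one lands on product sums $\sum_i\sum_k\mm(\sss_i)\mm(\bbb^{(j)}_k)\xi(x^{(j)}_k,s_i)$. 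If the original $\zeta$-sums fail to converge, so do these product sums, and that contradicts the improper integrability of $\xi$ on $\tilde{\bbb}\times\sss$. This packages in one stroke the uniformity you isolate as ``equi-integrability'': the tail control you obtain by splitting along $\partial\bbb^{(m)}$ is precisely what, in the paper's phrasing, forces the constructed product sums to converge. Your approach has the merit of making this uniformity explicit (and of separating the two limits cleanly), while the paper's contradiction argument is terser and avoids introducing the auxiliary notion; both rest on the same idea and both need, at bottom, that integrals of $\xi$ over bounded pieces of $(\tilde{\bbb}\setminus\bbb^{(m)})\times\sss$ become small uniformly as $m\to\infty$.
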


\begin{proof}
Consider exhaustive sets $\bbb^{(j)}\uparrow\tilde{\bbb}$. For the first of the repeated integrals~\eqref{eqinsr}, the
integral sums has the form
\begin{equation}\label{eqmxsr}
\sum_{1\le i\le i_0} \mm(\sss_{i})\int_{\tilde{\bbb}} \xi(x, s_{i})\,dx
\end{equation}
The integrals in~\eqref{eqmxsr} can be approximated by $\int_{\bbb^{(j)}} \xi(x, s_{i})\,dx$, and the last integral is
the limit of sums
\[
\sum_{1\le k\le k_0} \mm(\bbb^{(j)}_{k})\xi(x_{k}^{(j)}, s_{i}) .
\]
If integral sums~\eqref{eqmxsr} does not converge, then we can construct a non-convergent sequence of sums
\[
\sum_{1\le i\le i_0} \sum_{1\le k\le k_0} \mm(\sss_{i})\mm(\bbb^{(j)}_{k})\xi( x_{k}^{(j)}, s_{i}),
\]
and this contradicts the integrability of~$\xi$ on~$\sss\times\tilde{\bbb}$.

Further, by Theorem~\ref{thmxss}, for each~$j$ we have
\[
\int_{\bbb^{(j)}\times\sss} \xi(x, s)\,dx\times ds=\int_{\bbb^{(j)}} dx\int_{\sss} \xi(x, s)\,ds .
\]
The left hand side has the limit in probability as $j\to \infty$. Hence, the right hand side has the limit, and the
second equality of~\eqref{eqinsr} holds.
\end{proof}

\section{Integration by parts}
\label{scinpr} \setcounter{equation}{0}

To solve the parabolic stochastic equation, we need the following two lemmas.

\begin{lem}\label{lmetxi}
Let a random function $\xi(u):[0, s]\to L_0$ be integrable on~$[0, s]$. Then $\eta(u)=\int_0^u \xi(v)\,dv$ is
integrable on~$[0, s]$, and
\[
\int_0^s du \int_0^u \xi(v)\,dv=\int_0^s (s-v)\xi(v)\,dv .
\]
\end{lem}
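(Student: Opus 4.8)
The plan is to prove this Fubini-type identity by working directly with Riemann integral sums, exploiting the results already established in the paper, especially Lemma~\ref{lmseta} (which says integral sums on a set are uniformly close to the integral once the diameter is small) and Theorem~\ref{thmxss} (the analogue of Fubini for two deterministic variables). First, I would deal with the existence of $\eta(u)=\int_0^u\xi(v)\,dv$ and its integrability on $[0,s]$. Writing $\eta(u)=\int_{[0,s]}\mathbf 1_{[0,u]}(v)\,\xi(v)\,dv$, one checks that $u\mapsto\eta(u)$ is stochastically continuous (indeed, for $u'<u''$ we have $\eta(u'')-\eta(u')=\int_{[u',u'']}\xi(v)\,dv$, and by Lemma~\ref{lmbipr} the set of such integrals over subintervals is bounded in probability, while a refinement argument plus Lemma~\ref{lmseta} shows the quasi-norm goes to $0$ as $u''-u'\to 0$). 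Since $\eta$ has stochastically continuous, hence Riemann integrable, paths in the sense compatible with our definition, $\eta$ is integrable on $[0,s]$; boundedness in probability of its values is immediate from Lemma~\ref{lmbipr}.

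Next I would identify the double integral. The key observation is that $\int_0^s du\int_0^u\xi(v)\,dv$ equals the integral over the triangle $\Delta=\{(u,v):0\le v\le u\le s\}$ of the random function $(u,v)\mapsto\xi(v)$, via Theorem~\ref{thmxss} (or Corollary~\ref{crinsr} if one prefers to embed things in a rectangle and cut out the complement). Concretely, take $\zeta(u,v)=\xi(v)\mathbf 1_{\{v\le u\}}$ on $[0,s]\times[0,s]$; this is integrable on the square with respect to $du\times dv$ because it has Riemann integrable paths (the indicator of the triangle is Riemann integrable, $\xi$ is integrable in $v$, and a product-of-sums argument as in the proof of Theorem~\ref{thmxss} handles the joint integrability), and for fixed $u$ it is integrable in $v$ on $[0,s]$. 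Theorem~\ref{thmxss} then gives
\[
\int_{[0,s]^2}\zeta(u,v)\,du\times dv=\int_0^s du\int_0^s\zeta(u,v)\,dv=\int_0^s du\int_0^u\xi(v)\,dv.
\]

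Finally I would compute the same double integral in the other order, integrating first in $u$. For fixed $v$, $\int_0^s\zeta(u,v)\,du=\int_v^s\xi(v)\,du=(s-v)\xi(v)$, so applying Theorem~\ref{thmxss} with the roles of the two variables interchanged (the hypotheses are symmetric: $\zeta$ is integrable on the square, and for fixed $v$ it is certainly integrable in $u$, being a constant random variable times a Riemann integrable indicator) yields
\[
\int_{[0,s]^2}\zeta(u,v)\,du\times dv=\int_0^s dv\int_0^s\zeta(u,v)\,du=\int_0^s(s-v)\xi(v)\,dv,
\]
where the last equality uses Lemma~\ref{lmifxi} to see that $(s-v)\xi(v)$ is integrable. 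Combining the two evaluations gives the claimed identity.

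The main obstacle I expect is the careful verification that $\zeta(u,v)=\xi(v)\mathbf 1_{\{v\le u\}}$ is integrable on the square $[0,s]^2$ with respect to $du\times dv$, since our notion of integrability requires convergence of Riemann sums along \emph{every} sequence of partitions with vanishing mesh, and the partitioning sets need not respect the diagonal. The honest way around this is to approximate: on partition cells that meet the diagonal, the total Jordan content is small (order of the mesh times $s$), and the values of $\xi$ are bounded in probability (Lemma~\ref{lmbdpr}), so by \eqref{eqvaht} the contribution of those cells to any integral sum is small; on cells strictly above or strictly below the diagonal $\zeta$ coincides with $\xi(v)$ or $0$, where convergence follows from the integrability of $\xi$ in $v$ combined with a product-of-sums estimate exactly as in the proof of Theorem~\ref{thmxss}. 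Once this joint integrability is in hand, both halves of the argument are routine applications of Theorem~\ref{thmxss} and the one-variable auxiliary lemmas.
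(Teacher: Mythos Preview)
Your strategy via Theorem~\ref{thmxss} is genuinely different from the paper's direct Riemann-sum comparison, and it is appealing, but two of its steps do not go through as written.

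First, the claim that stochastic continuity of $\eta$ implies its integrability in the sense of Definition~\ref{dfintr} is false: the Example at the end of Section~\ref{scdfri} is precisely a stochastically continuous function on $[0,1]$ that fails to be integrable. Pathwise continuity would suffice, but $\eta(u)=\int_0^u\xi(v)\,dv$ is defined only as a limit in probability and need not have continuous paths. The paper does not prove integrability of $\eta$ separately; it falls out of the direct comparison of sums.

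Second, and more seriously, the joint integrability of $\zeta(u,v)=\xi(v)\mathbf 1_{\{v\le u\}}$ on $[0,s]^2$ is exactly the hypothesis of Theorem~\ref{thmxss} that you must supply, and your sketch does not do so. The appeal to ``a product-of-sums estimate exactly as in the proof of Theorem~\ref{thmxss}'' is circular: that proof \emph{assumes} integrability on the product and derives the iterated formula, not the other way around. Even the simpler statement that $(u,v)\mapsto\xi(v)$ is integrable on a rectangle, given only one-variable integrability of $\xi$, is not available from the earlier lemmas; for a general (non-product) partition the sampled $v$-coordinates need not lie in the $u$-slices of the cells, and controlling the resulting sums forces exactly the $I_1-I_2$ manoeuvre from the proof of Lemma~\ref{lmifxi} together with~\eqref{eqvaht} and Lemma~\ref{lmseta}. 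Carrying this out honestly is essentially the same work as the paper's proof, which sidesteps the issue by comparing a Riemann sum $\sum_k\mm(\bbb_k)\int_0^{u_k}\xi(v)\,dv$ for $\int_0^s\eta(u)\,du$ directly against a Riemann sum $\sum_i\mm(\ccc_i)(s-v_i)\xi(v_i)$ for $\int_0^s(s-v)\xi(v)\,dv$, and bounding the difference using~\eqref{eqvaht}, Lemma~\ref{lmseta} and Lemma~\ref{lmbipr}. Indeed, if you specialise your $\zeta$-argument to product partitions you recover precisely that computation.
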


\begin{proof}
By Lemma~\ref{lmifxi}, the function $(s-v)\xi(v)$ is integrable, by Lemma~\ref{lmseta} $\eta(u)$ is well defined. The
integral sum of~$\int_0^s \eta(u)\,du$ has the form
\begin{equation}\label{eqbieta}
\sum_{1\le k\le k_0}\mm(\bbb_{k})\int_0^{u_{k}}\xi(v)\,dv,\quad u_{k}\in\bbb_{k} .
\end{equation}
We can take a new partition $[0, s]=\cup_{1\le i\le i_0} \ccc_i$ such that each integral  $\int_0^{u_{k}}\xi(v)\,dv$ be
close enough to integral sum with this partition (Lemma~\ref{lmseta}). Thus we can approximate~\eqref{eqbieta}
arbitrary closely by the sum
\begin{equation}\label{eqbietk}
\sum_{1\le k\le k_0}\mm(\bbb_{k})\sum_{1\le i\le i_0}\mm(\ccc_{i}\cap[0, {u_{k}}])\xi(v_{i}),\quad v_{i}\in\ccc_{i} .
\end{equation}
For $\int_0^s (s-v)\xi(v)\,dv$, take the integral sum
\begin{equation}\label{eqbieti}
\sum_{1\le i\le i_0}\mm(\ccc_{i})(s-v_{i})\xi(v_{i}) .
\end{equation}
The difference of \eqref{eqbieti} and \eqref{eqbietk} is equal to
\begin{equation}\label{mldfxi}
\sum_{1\le i\le i_0}\xi(v_{i})[\mm(\ccc_{i})(s-v_{i})- \mm(\ccc_{i})\sum_{k:\ \ccc_{i}<\bbb_{k}}\mm(\bbb_{k})  -
\sum_{k:\ \ccc_{i}\cap\bbb_{k}\ne\emptyset}\mm(\bbb_{k})\mm(\ccc_{i}\cap[0, {u_{k}}])] .
\end{equation}
Notation $\ccc_{i}<\bbb_{k}$ means that $v<u$ for all $v\in\ccc_{i}, u\in\bbb_{k}$. We have
\[
0\le(s-v_{i})-\sum_{k:\ \ccc_{i}<\bbb_{k}}\mm(\bbb_{k})\le \max_i\diam \ccc_{i}+\max_k\diam \bbb_{k} .
\]
The last sum of~\eqref{mldfxi} is not greater than
\[
\mm(\ccc_{i}) \sum_{k:\ \ccc_{i}\cap\bbb_{k}\ne\emptyset}\mm(\bbb_{k})\le \mm(\ccc_{i})(\max_i\diam \ccc_{i}+2
\max_k\diam \bbb_{k}) .
\]
Therefore, value~\eqref{mldfxi} may be written in the form $\sum_{1\le i\le i_0}\xi(v_{i})\mm(\ccc_{i})\alpha_{i}$,
where $\alpha_{i}\to 0$ as $\diam \ccc_{i},\ \diam \bbb_{k}\to 0$. From~\eqref{eqvaht} we obtain
\begin{equation}\label{eqxiiv}
\Bigl\|\sum_{1\le i\le i_0}\xi(v_{i})\mm(\ccc_{i})\alpha_{i}\Bigr\|\le 16\max_{V} \Bigl\|\max_{i}|\alpha_{i}|\sum_{i\in
V}\xi(v_{i})\mm(\ccc_{i})\Bigr\|.
\end{equation}
The sums $\sum_{i\in V}\xi(v_{i})\mm(\ccc_{i})$ are close to respective integrals for $\diam\ccc_i$ small enough
(Lemma~\ref{lmseta}) and values of integrals are bounded in probability (Lemma~\ref{lmbipr}). Therefore, the left hand
side of~\eqref{eqxiiv} tends to zero as $\max_{i}|\alpha_{i}|\to 0$.
\end{proof}

\begin{lem}\label{lmintp}
Let a random function $\xi(u):[0, s]\to L_0$ be integrable on~$[0, s]$, $f\in\cc^{(1)}([0, s])$ be a deterministic
function. Then
\begin{equation}\label{eqinfc}
f(s)\int_{0}^s \xi(u)\,du=\int_{0}^s  f(u)\xi(u)\,du+\int_{0}^s  f'(u)\,du\int_{0}^u \xi(v)\,dv .
\end{equation}
\end{lem}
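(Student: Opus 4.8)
The plan is to prove the integration-by-parts formula \eqref{eqinfc} by reducing it, via Lemma~\ref{lmetxi}, to a statement purely about integral sums and then using the fundamental theorem of calculus for the deterministic factor $f$. First I would note that all three integrals in \eqref{eqinfc} are well defined: $\int_0^s\xi(u)\,du$ exists by hypothesis, $f(u)\xi(u)$ is integrable on $[0,s]$ by Lemma~\ref{lmifxi} (since $f\in\cc^{(1)}([0,s])$ is in particular uniformly continuous on the compact interval $[0,s]$), and the iterated integral $\int_0^s f'(u)\,du\int_0^u\xi(v)\,dv$ makes sense because $\eta(u)=\int_0^u\xi(v)\,dv$ is integrable on $[0,s]$ by Lemma~\ref{lmetxi}, hence $f'(u)\eta(u)$ is integrable by Lemma~\ref{lmifxi} again (using $f'\in\cc([0,s])$ uniformly continuous).

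Next I would apply Lemma~\ref{lmetxi} to rewrite the last term of \eqref{eqinfc}. Actually Lemma~\ref{lmetxi} gives $\int_0^s du\int_0^u\xi(v)\,dv=\int_0^s(s-v)\xi(v)\,dv$; I need the weighted analogue with $f'$. The cleanest route is: fix a partition $0=t_0<t_1<\dots<t_{k_0}=s$ with points $v_i\in[t_{i-1},t_i]$, and write out an integral sum for the right-hand side of \eqref{eqinfc}. The term $\int_0^s f'(u)\,du\int_0^u\xi(v)\,dv$ has integral sums $\sum_i\mm(\ccc_i)f'(v_i)\int_0^{v_i}\xi(v)\,dv$; exactly as in the proof of Lemma~\ref{lmetxi}, each $\int_0^{v_i}\xi(v)\,dv$ can be approximated by $\sum_k\mm(\ccc_k\cap[0,v_i])\xi(v_k)$ on a common refinement, so this sum is close (in quasi-norm, by Lemma~\ref{lmseta}) to $\sum_k\xi(v_k)\sum_{i:\,\ccc_i\cap[0,v_k]\ne\emptyset}\mm(\ccc_i)f'(v_i)\mm(\ccc_k\cap[0,v_i])$. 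Meanwhile the first term $\int_0^s f(u)\xi(u)\,du$ has integral sums $\sum_k\mm(\ccc_k)f(v_k)\xi(v_k)$, and the left-hand side $f(s)\int_0^s\xi(u)\,du$ has sums $f(s)\sum_k\mm(\ccc_k)\xi(v_k)$. Collecting coefficients of each $\xi(v_k)$, the identity \eqref{eqinfc} will follow once I show that the scalar quantity
\[
f(s)\mm(\ccc_k)-f(v_k)\mm(\ccc_k)-\sum_{i:\,\ccc_i\cap[0,v_k]\ne\emptyset}\mm(\ccc_i)f'(v_i)\,\mm(\ccc_k\cap[0,v_i])
\]
is of the form $\mm(\ccc_k)\beta_k$ with $\max_k|\beta_k|\to0$ as the mesh tends to zero; then \eqref{eqvaht} together with Lemmas~\ref{lmseta} and~\ref{lmbipr} (boundedness in probability of the partial integrals $\sum_{k\in V}\xi(v_k)\mm(\ccc_k)$) forces the difference of the two sides to vanish in probability, exactly as in the final estimate \eqref{eqxiiv} of Lemma~\ref{lmetxi}.

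The scalar estimate is where the real content sits. Factoring $\mm(\ccc_k)$ out, I must show
\[
\Bigl|f(s)-f(v_k)-\sum_{i:\,\ccc_i\cap[0,v_k]\ne\emptyset}f'(v_i)\,\mm(\ccc_i\cap[0,v_k])\Bigr|\to0
\]
uniformly in $k$ as the mesh goes to zero. But $\sum_{i}f'(v_i)\mm(\ccc_i\cap[0,v_k])$ is precisely a Riemann sum for $\int_0^{v_k}f'(u)\,du=f(v_k)-f(0)$ --- wait, that is off by $f(s)-2f(v_k)+f(0)$, so this bookkeeping needs care. The correct grouping (as in Lemma~\ref{lmetxi}, where the analogous sum telescopes against the $(s-v_i)$ term) is to compare $f(s)-f(v_k)$ with $\sum_{i:\,v_i\ge v_k}f'(v_i)\mm(\ccc_i)$, a Riemann sum for $\int_{v_k}^s f'(u)\,du=f(s)-f(v_k)$; the boundary cell containing $v_k$ contributes an $O(\text{mesh})$ error, and uniform continuity of $f'$ on $[0,s]$ bounds the difference between the Riemann sum and the integral by $\omega_{f'}(\text{mesh})\cdot s\to0$ uniformly in $k$. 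So the main obstacle is purely organizational: setting up the double sum so that the telescoping/Riemann-sum cancellation is transparent and the leftover is visibly $\mm(\ccc_k)o(1)$; once that combinatorial identity is written down correctly, the probabilistic part is a verbatim repeat of the argument concluding Lemma~\ref{lmetxi}.
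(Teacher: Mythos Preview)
Your approach is viable in outline but differs from the paper's, and your exposition has a bookkeeping gap you should be aware of. The paper does \emph{not} fully discretize to a double Riemann sum and then chase a scalar coefficient. Instead, for a partition $0=u_0<\dots<u_{k_0}=s$ it writes down the exact Abel summation identity
\[
f(s)\int_0^s\xi(v)\,dv=\sum_{k}(f(u_k)-f(u_{k-1}))\int_0^{u_k}\xi(v)\,dv+\sum_{k} f(u_{k-1})\int_{u_{k-1}}^{u_k}\xi(v)\,dv,
\]
and then proves the two sums converge separately: the first to $\int_0^s f'(u)\,du\int_0^u\xi(v)\,dv$ via the Lagrange mean value theorem for $f$ together with \eqref{eqvaht} and Lemma~\ref{lmbipr}, and the second to $\int_0^s f(u)\xi(u)\,du$ via \eqref{eqvaht} and Lemma~\ref{lmseta}. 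No inner discretization of $\int_0^{u_k}\xi$ is ever needed, which sidesteps the issue below.

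In your route, the step ``each $\int_0^{v_i}\xi$ can be approximated by $\sum_k\mm(\ccc_k\cap[0,v_i])\xi(v_k)$ on a common refinement, so this sum is close\dots'' needs care: if you use the \emph{same} partition for the outer index $i$ and the inner index $k$ (as your notation suggests), then the number of approximation errors you are summing grows with the partition, and subadditivity of the quasi-norm alone yields a bound of order $k_0\cdot\varepsilon$, not $\varepsilon$. This is exactly why the proof of Lemma~\ref{lmetxi} keeps \emph{two} partitions, choosing the inner one after the outer one is fixed. If you reinstate that two-partition structure, your scalar comparison of $f(s)-f(v_k)$ with a Riemann sum of $f'$ over $[v_k,s]$ does go through (your ``correct grouping'' paragraph is right), and the endgame via \eqref{eqvaht} and Lemmas~\ref{lmseta},~\ref{lmbipr} is indeed the same as in Lemma~\ref{lmetxi}. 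But the resulting argument is then longer than the paper's and gains nothing; the Abel identity is the cleaner organizing device for this lemma.
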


\begin{proof}
From Lemmas~\ref{lmifxi} and~\ref{lmetxi} it follows that the random functions $\zeta_1(u)=f(u)\xi(u),\
\zeta_2(u)=f'(u)\int_{0}^u \xi(v)\,dv$ are integrable on $[0, s]$. First, let us show that for
$0=u_0<u_1<\dots<u_{k_0}=s,\ \alpha=\max_{k}|u_{k}-u_{k-1}|$, we have
\begin{equation}\label{mlsfuk}
\sum_{1\le k\le k_0} (f(u_{k})-f(u_{k-1}))\int_0^{u_{k}} \xi(v)\,dv\rp\int_0^s f'(u)\,du\int_0^u \xi(v)\,dv,\quad
\alpha\to 0.
\end{equation}
Applying the Lagrange formula and integrability of~$\zeta_2$, for some $\tilde{u}_{k}\in (u_{k-1}, u_{k})$ we obtain
\begin{multline*}
\sum_{1\le k\le k_0}(f(u_{k})-f(u_{k-1}))\int_0^{u_{k}} \xi(v)\,dv
=\sum_{1\le k\le k_0} f'(\tilde{u}_{k})(u_{k}-u_{k-1})\int_0^{u_{k}} \xi(v)\,dv\\
=\sum_{1\le k\le k_0} f'(\tilde{u}_{k})(u_{k}-u_{k-1})\int_0^{\tilde{u}_{k}}
\xi(v)\,dv+\sum_{1\le k\le k_0} f'(\tilde{u}_{k})(u_{k}-u_{k-1})\int_{\tilde{u}_{k}}^{u_{k}} \xi(v)\,dv,\\
\sum_{1\le k\le k_0} f'(\tilde{u}_{k})(u_{k}-u_{k-1})\int_0^{\tilde{u}_{k}} \xi(v)\,dv\rp \int_0^s f'(u)\,du\int_0^u
\xi(v)\,dv,\quad \alpha\to 0 .
\end{multline*}
For $C_1=\max_{u}|f'(u)|$, from~\eqref{eqvaht} we have
\[
\Bigl\|\sum_{1\le k\le k_0} f'(\tilde{u}_{k})(u_{k}-u_{k-1})\int_{\tilde{u}_{k}}^{u_{k}} \xi(v)\,dv\Bigr\| \le 16
\max_{V}\Bigl\|C_1\alpha\sum_{k\in V} \int_{\tilde{u}_{k}}^{u_{k}} \xi(v)\,dv\Bigr\|\le 16\sup_{\aaa}
\Bigl\|C_1\alpha\int_{\aaa} \xi(v)\,dv\Bigr\|.
\]
From Lemma~\ref{lmbipr} it follows that the last value tends to 0 as $\alpha\to 0$. Therefore, \eqref{mlsfuk} is
proved.

Integrability of $\zeta_1$ implies
\begin{multline*}
\sum_{1\le k\le k_0} f(u_{k-1})\int_{u_{k-1}}^{u_{k}} \xi(v)\,dv=\sum_{1\le k\le k_0} f(u_{k-1})
\xi(u_{k-1})(u_{k}-u_{k-1})\\
+\sum_{1\le k\le k_0} f(u_{k-1})\int_{u_{k-1}}^{u_{k}} (\xi(v)-\xi(u_{k-1}))\,dv ,\\
\sum_{1\le k\le k_0} f(u_{k-1})\xi(u_{k-1})(u_{k}-u_{k-1})\rp \int_{0}^s f(u)\xi(u)\,du,\quad \alpha\to 0 .
\end{multline*}
For $C_0=\max_{u}|f(u)|$, from~\eqref{eqvaht} we get
\begin{multline*}
\Bigl\|\sum_{1\le k\le k_0}  f(u_{k-1})\int_{u_{k-1}}^{u_{k}} (\xi(v)-\xi(u_{k-1}))\,dv \Bigr\|
\le 16 \max_{V}\Bigl\|C_0\sum_{k\in V} \int_{u_{k-1}}^{u_{k}} (\xi(v)-\xi(u_{k-1}))\,dv \Bigr\|\\
=16 \max_{V}\Bigl\|C_0(\int_{\cup_{k\in V} [u_{k-1}, u_{k}]} \xi(v)\, dv - \sum_{k\in V} \xi(u_{k-1})(u_{k}-u_{k-1}) )
\Bigr\| .
\end{multline*}
By Lemma~\ref{lmseta}, the last value tends to 0 as $\alpha\to 0$.

Further, we take the obvious equality
\[
f(s)\int_{0}^s \xi(v)\,dv=\sum_{1\le k\le k_0} (f(u_{k})-f(u_{k-1}))\int_0^{u_{k}} \xi(v)\,dv +\sum_{1\le k\le k_0}
f(u_{k-1})\int_{u_{k-1}}^{u_{k}} \xi(v)\,dv
\]
and pass to the limit as $\alpha\to 0$.
\end{proof}

\section{Parabolic equation with a general stochastic measure}
\label{scslpe} \setcounter{equation}{0}

Consider the differential operator
\[
Ag(x)=\sum_{1\le i,j\le d}a_{ij}(x)\dfrac{\partial^2 g(x)}{\partial x_i\partial x_j}+ \sum_{1\le i\le
d}b_{i}(x)\dfrac{\partial g(x)}{\partial x_i}+c(x)g(x),
\]
where $g:\rr^d\to\rr$ and $a_{ij}=a_{ji}$. Suppose that $A$ is strongly elliptic in~$\rr^d$ (see~(4.5)~\cite{ilkaol}).

\begin{ass}\label{ass1} All functions $a_{ij},\ b_{i},\ c,\ \dfrac{\partial a_{ij} }{\partial x_i}$, $\dfrac{\partial^2
a_{ij} }{\partial x_i \partial x_j}$, $\dfrac{\partial b_{i} }{\partial x_i}$ are bounded and H\"{o}lder continuous
in~$\rr^d$.
\end{ass}

From now on let $\mu$ be a stochastic measure on Borel subsets of~$[0, T]$.

We will  study the equation
\[
dX(x, t)=AX(x, t)\,dt+f(x, t)d\mu(t),\quad X(x, 0)=\xi(x), \tag{\ref{eqeopa}}
\]
where $X:\rr^d\times [0, T]\to L_0$ is an unknown random function.

We consider~\eqref{eqeopa} in the weak sense, i.e.
\begin{multline}
\label{eqhbmuinw} \int_{\rr^d}X(x, t)\varphi(x)\, dx=\int_{\rr^d}\xi(x)\varphi(x)\, dx\\
+\int_{\rr^d} A^*\varphi(x)\,dx\int_0^t X(x, s)\,ds+\int_{[0, t]}\,d\mu(s)\int_{\rr^d} f(x, s)\varphi(x)\, dx
\end{multline}
for all test functions $\varphi\in\ssss(\rr^d)$ (rapidly decreasing Schwartz functions from $\cc^{\infty}(\rr^d)$). For
each fixed $t\in [0, T]$ equality~\eqref{eqhbmuinw} holds a.s. Integrals of random functions with respect to $dx$ and
$ds$ are considered in Riemann sense (see section~\ref{scdfri}), and $A^*$ denotes the adjoint operator of $A$.

\begin{ass}\label{ass2} $\xi:\rr^d\to L_0$ is such that $\xi(\cdot, \omega)$ is continuous and bounded in $\rr^d$
for each fixed~$\omega\in\Omega$. \end{ass}

\begin{ass}\label{ass3} $f:\rr^d\times [0, T]\to\rr$ is Borel measurable, $\sup_{t}|x|^{-k} |f(x, t)|\to 0,\
|x|\to\infty$, for some $k>0$, $f(x, \cdot)$ is continuous and bounded in $\rr^d$ for each fixed~$t\in [0, T]$.
\end{ass}

By Theorem~1 \S 4~\cite{ilkaol}, under Assumption~\ref{ass1}, the equation $\partial g / \partial t= Ag$ has a
fundamental solution~$p(x, y, t-s)$ (recall that coefficients of~$A$ do not depend on~$t$). The following estimate is
well known:
\[
|p(x, y, t)|\le C_1 t^{-d/2}\exp\{-C_2|x-y|^2/t\}
\]
(see, for example, (4.16)~\cite{ilkaol}). Consider the semigroup
\[
S(t)g(x)=\int_{\rr^d} p(x, y, t)g(y)\,dy,\quad t>0,\quad S(0)g(x)=g(x) .
\]
Theorem~2 \S4~\cite{ilkaol} implies that for any continuous bounded~$g$
\begin{equation}\label{eqstga}
S(t)g(x)=g(x)+A\int_0^t [S(s)g(x)] \,ds.
\end{equation}

\begin{thm}\label{thhbmuinw}
Suppose Assumptions \ref{ass1}--\ref{ass3} hold. Then the random function
\begin{equation}
\label{eqhbmusl} X(x, t)=S(t)\xi(x)+\int_{[0, t]} [S(t-s)f(x,s)]\,d\mu(s)
\end{equation}
is the solution of~\eqref{eqhbmuinw}.

In addition, suppose the operator $A$ is self-adjoint, $X(x, t)$ satisfies~\eqref{eqhbmuinw}, is integrable
on~$\rr^d\times[0, T]$ with respect to~$dx\times dt$, is integrable on~$\rr^d$ with respect to~$dx$ for each fixed~$t$,
and is integrable on~$[0, T]$ with respect to~$dt$ for each fixed~$x$. Then $X(x, t)$ is given by~\eqref{eqhbmusl}.
\end{thm}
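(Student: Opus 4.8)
The plan is to split the statement into an existence part and a uniqueness part. For existence, I would verify directly that the candidate $X(x,t)$ in~\eqref{eqhbmusl} satisfies~\eqref{eqhbmuinw}. First I would check that $X(x,t)$ is well defined: the term $S(t)\xi(x)$ is a deterministic continuous bounded function of $x$ by Assumption~\ref{ass2} and the Gaussian bound on $p$, while the stochastic term $\int_{[0,t]}[S(t-s)f(x,s)]\,d\mu(s)$ makes sense because $s\mapsto S(t-s)f(x,s)$ is bounded and measurable, using Assumption~\ref{ass3} and the Gaussian estimate to control $S(t-s)f(x,s)$ uniformly (the polynomial growth of $f$ in $x$ is absorbed by the Gaussian kernel, so the bound is locally uniform in $x$ and integrable on $[0,t]$). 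Then I would multiply~\eqref{eqhbmusl} by $\varphi(x)$ and integrate over $\rr^d$; the deterministic part gives $\int_{\rr^d}S(t)\xi(x)\varphi(x)\,dx$, which I rewrite using~\eqref{eqstga} and the self-adjointness formalism (moving $A$ onto $\varphi$ via $A^*$) together with Theorem~\ref{thmxss} / Corollary~\ref{crinsr} to interchange $\int_{\rr^d}dx$ and $\int_0^t ds$. For the stochastic part, I would apply Corollary~\ref{thcisur} to interchange $\int_{\rr^d}dx$ and $\int_{[0,t]}d\mu(s)$ (checking the domination hypotheses $|h|\le g(s)$ and $\int_{\rr^d}|h|\,dx=g_1(s)$ from the Gaussian bound), then use~\eqref{eqstga} again pointwise in $s$, and finally Lemma~\ref{lmintp} (integration by parts, with $f(u)=$ appropriate time-dependent factor) to convert the resulting double time integral into the form appearing on the right-hand side of~\eqref{eqhbmuinw}. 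Collecting terms should reproduce~\eqref{eqhbmuinw} exactly.

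For uniqueness, assume $A$ is self-adjoint and that $X(x,t)$ satisfies~\eqref{eqhbmuinw} with the stated integrability properties. I would set $Y(x,t)$ equal to the difference of $X$ and the explicit solution~\eqref{eqhbmusl}; then $Y$ satisfies the homogeneous weak equation
\[
\int_{\rr^d}Y(x,t)\varphi(x)\,dx=\int_{\rr^d}A^*\varphi(x)\,dx\int_0^t Y(x,s)\,ds
\]
for all $\varphi\in\ssss(\rr^d)$, with the same integrability properties. The idea is then to test against $\varphi(x)=p(x,y,T-t)$ (or a Schwartz approximation of it) and use~\eqref{eqstga} to show $\frac{d}{dt}\int_{\rr^d}Y(x,t)p(x,y,T-t)\,dx=0$ in a weak/integrated sense, forcing $\int_{\rr^d}Y(x,T)p(x,\cdot,\varepsilon)\,dx$ to equal its value at $t=0$, which is $0$; letting $\varepsilon\to0$ and using continuity of the heat semigroup gives $Y\equiv0$. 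The repeated-integral manipulations here again rely on Theorem~\ref{thmxss} and the interchange corollaries, plus Lemma~\ref{lmintp}.

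The main obstacle I expect is the uniqueness half: justifying the use of the non-Schwartz test function $p(x,y,T-t)$ (which is not compactly supported and has a singularity at $t=T$) requires an approximation argument in $\ssss(\rr^d)$ together with careful dominated-convergence control, where the only convergence available is convergence in probability and the only tool is the quasi-norm inequality~\eqref{eqvaht} and the boundedness-in-probability lemmas (Lemmas~\ref{lmbipr}, \ref{lmbdpr}); one must propagate estimates through the interchange-of-integration steps while keeping all bounds uniform in the approximation parameter. A secondary technical point is verifying the domination hypotheses of Corollary~\ref{thcisur} uniformly on compact $x$-sets, i.e. that $\sup_{|x|\le R}|S(t-s)f(x,s)|$ and $\sup_{|x|\le R}\int_{\rr^d}|p(x,y,t-s)f(y,s)|\,dy$ are $d\mu(s)$-integrable on $[0,t]$ — this follows from the Gaussian bound and Assumption~\ref{ass3}, but needs to be stated carefully since $\mu$ is only $\sigma$-additive in probability.
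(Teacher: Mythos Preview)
Your overall strategy matches the paper's, but two points are worth flagging.

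For existence, the paper proceeds exactly as you outline except that Lemma~\ref{lmintp} plays no role there. The crucial step you gloss over is the interchange
\[
\int_0^t ds\int_{[0,s]}[S(s-u)f(x,u)]\,d\mu(u)=\int_{[0,t]}d\mu(u)\int_u^t[S(s-u)f(x,u)]\,ds,
\]
which is precisely Theorem~\ref{thcisu} (stochastic Fubini for a deterministic integrand). After that, Corollary~\ref{thcisur} swaps $dx$ and $d\mu$, $A^*$ moves to $A$, and~\eqref{eqstga} does the rest. No ``double time integral'' in the sense of Lemma~\ref{lmintp} arises; your invocation of that lemma here is misplaced.

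For uniqueness, your idea of testing against the backward kernel is right, but the obstacle you identify --- that $p(x,y,T-t)$ is not Schwartz --- is one the paper sidesteps entirely. The paper takes $\psi_{t,s}(x)=S(t-s)\varphi(x)$ with $\varphi\in\ssss(\rr^d)$; under Assumption~\ref{ass1} this \emph{is} a Schwartz function, so it is a legitimate test function in~\eqref{eqhbmuinw} with no approximation needed. One then computes $\int_{\rr^d}X(x,s)\psi_{t,s}(x)\,dx$ using~\eqref{eqzuns}, and here is where Lemma~\ref{lmintp} actually enters: it lets you pass from $A\psi_{t,s}(x)\int_0^s X(x,u)\,du$ to $\int_0^s A\psi_{t,u}(x)X(x,u)\,du$ plus a term with $\partial_u\psi_{t,u}$, which combine to zero by the backward equation $A\psi_{t,u}+\partial_u\psi_{t,u}=0$. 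Finally $t\downarrow s$ and Lemma~\ref{lmfnxr} give $\int_{\rr^d}X(x,s)\varphi(x)\,dx=0$. So the integration-by-parts lemma belongs to the uniqueness half, not existence, and the approximation headache you anticipate does not arise.
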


\begin{proof} From~\eqref{eqstga} it follows that for
$X_1(x, t)=S(t)\xi(x)$ and~$f=0$ equality~\eqref{eqhbmuinw} holds. For $X_2(x, t)=\int_{[0, t]}
[S(t-s)f(x,s)]\,d\mu(s)$ we have
\begin{multline*}
\int_{\rr^d} A^*\varphi(x)\,dx\int_0^t X_2(s)\,ds+\int_{[0, t]}\,d\mu(s)\int_{\rr^d} f(x,s)\varphi(x)\, dx\\
=\int_{\rr^d} A^*\varphi(x)\,dx\int_0^t \,ds\int_{[0, s]} [S(s-u)f(x,u)]\,d\mu(u)
+\int_{[0, t]}\,d\mu(s)\int_{\rr^d}f(x,s)\varphi(x)\, dx\\
\stackrel{\eqref{eqcisu}}{=}\int_{\rr^d} A^*\varphi(x)\,dx\int_{[0, t]} \,d\mu(u)\int_u^t [S(s-u)f(x,u)]\,ds
+\int_{[0, t]}\,d\mu(s)\int_{\rr^d}f(x,s)\varphi(x)\, dx\\
\stackrel{\eqref{eqcisun}}{=}\int_{[0, t]} \,d\mu(u) \int_{\rr^d} A^*\varphi(x)\,dx\int_u^t [S(s-u)f(x,u)]\,ds
+\int_{[0, t]}\,d\mu(s)\int_{\rr^d}f(x,s)\varphi(x)\, dx\\
=\int_{[0, t]} \,d\mu(u) \int_{\rr^d} \varphi(x)\,dx  A\int_u^t [S(s-u)f(x,u)]\,ds
+\int_{[0, t]}\,d\mu(s)\int_{\rr^d} f(x,s)\varphi(x)\, dx\\
\stackrel{\eqref{eqstga}}{=}\int_{[0, t]} \,d\mu(u) \int_{\rr^d} \varphi(x)\,dx ([S(t-u)f(x,u)]-f(x,u))
+\int_{[0, t]}\,d\mu(s)\int_{\rr^d} f(x,s)\varphi(x)\, dx\\
=\int_{[0, t]} \,d\mu(u) \int_{\rr^d} \varphi(x) [S(t-u)f(x,u)]\,dx
\stackrel{\eqref{eqcisun}}{=}\int_{\rr^d}\varphi(x)\, dx \int_{[0, t]} [S(t-s)f(x,s)]\,d\mu(s)\\
= \int_{\rr^d} X_2(x,t)\varphi(x)\, dx .
\end{multline*}
Therefore~\eqref{eqhbmuinw} holds for~$X=X_1+X_2$.

Finally, we will prove the uniqueness of the solution. Section~\ref{scintc} implies that random function $X$ given
by~\eqref{eqhbmusl} is integrable. Thus, it is enough to prove that the equation
\begin{equation}\label{eqzuns}
\int_{\rr^d}X(x, t)\varphi(x)\, dx=\int_{\rr^d} A^*\varphi(x)\, dx\int_0^t X(x, s)\,ds
\end{equation}
has only the zero solution provided that~$A=A^*$.

For $\varphi\in\ssss(\rr^d)$ and $0<s<t$ set $ \psi_{t,s}(x)=S(t-s)\varphi(x)$. Then
\[
\psi_{t,s}\in\ssss(\rr^d),\quad A\psi_{t,s}+\dfrac{\partial }{\partial s}\psi_{t,s}=0,\quad \psi_{t,s}\to\varphi
\]
uniformly on any bounded subset of~$\rr^d$ as $t\downarrow s$ (see~(4.13)~\cite{ilkaol}), and we get
\begin{multline*}
\int_{\rr^d}X(x, t)\psi_{t,s}(x)\, dx\stackrel{\eqref{eqzuns}}{=} \int_{\rr^d} A\psi_{t,s}(x)\, dx\int_0^s X(x, u)\,du\\
\stackrel{\eqref{eqinfc}}{=}\int_{\rr^d} \, dx\int_0^s A\psi_{t,u}(x)X(x, u)\,du
+ \int_{\rr^d} \, dx\int_0^s A\dfrac{\partial }{\partial u}\psi_{t,u}(x)\,du \int_0^u X(x, v)\,dv\\
\stackrel{\eqref{eqinsr}}{=}\int_{\rr^d} \, dx\int_0^s A\psi_{t,u}(x)X(x, u)\,du
+ \int_0^s \, du\int_{\rr^d} A\dfrac{\partial }{\partial u}\psi_{t,u}(x)\,dx\int_0^u X(x, v)\,dv\\
\stackrel{\eqref{eqzuns}}{=} \int_{\rr^d} \, dx\int_0^s A\psi_{t,u}(x)X(x, u)\,du+
\int_0^s \,du\int_{\rr^d} \dfrac{\partial }{\partial u}\psi_{t,u}(x)X(x, u)\,dx\\
\stackrel{\eqref{eqinsr}}{=}\int_{\rr^d} \, dx\int_0^s \Bigl(A\psi_{t,u}(x)+\dfrac{\partial }{\partial
u}\psi_{t,u}(x)\Bigr)X(x, u)\,du=0 .
\end{multline*}

Passing to the limit as $t\downarrow s$ and applying Lemma~\ref{lmfnxr}, we arrive at
\[
\int_{\rr^d}X(x, s)\varphi (x)\, dx=0.\qedhere
\]
\end{proof}

\begin{exa}
Let stochastic measure $\mu$ be generated by a continuous square integrable martingale $Y$, $\mu(\aaa)=\int_0^T {\bf
1}_{\aaa}(t)\,dY(t)$, $\lambda$ be the Lebesgue measure on~$\rr^d$. Then $M_t(\aaa)=Y(t)\lambda(\aaa)$, $0\le t\le T$,
$\aaa\subset\rr^d$, is a worthy martingale measure with the dominating measure
\[
K(\aaa\times\bbb\times(0, t])=\bigl|\langle Y \rangle_t\bigr|\lambda(\aaa)\,\lambda(\bbb)
\]
(we use the terminology of~\cite{walsh}). In this case, \eqref{eqhbmusl} leads to
\begin{multline}\label{mltint}
X(x, t)=\int_{\rr^d} p(x, y, t)\xi(y)\,dy+\int_{[0, t]} \,d\mu(s)\int_{\rr^d} p(x, y, t-s)f(y,s)\,dy\\= \int_{\rr^d}
p(x, y, t)\xi(y)\,dy+\int_{[0, t]\times \rr^d} p(x, y, t-s)f(y,s)\,M(dy\, ds).
\end{multline}
The results of \cite[Chapter 2]{walsh} imply that the integral with respect to $M(dy\, ds)$ is well defined and is the
limit of integrals of elementary functions. For elementary function, equality of two stochastic integrals
in~\eqref{mltint} is obvious. Further, we can use the dominated convergence theorem for integral with respect to
$d\mu(s)$.

Similar solution of parabolic SPDE with respect to general martingale measure we have in Example~9 and
Remark~20~\cite{dalang}.
\end{exa}

\begin{exa} Assume that $\mu$ is generated by real-valued Wiener process $w$, $\jj$ denotes the set
of Schwartz rapidly decreasing test functions in $\rr^d$. Then equation
\[
\langle\ww(t),\psi\rangle=w(t)\int_{\rr^d} \psi(x)\,dx,\quad \psi\in\jj,
\]
defines the spatially homogeneous Wiener process with values in $\jj'$ (we used the terminology of~\cite{peszab97}).
For this case, our equality~\eqref{eqhbmusl} is a partial case of mild solution (2.6)~\cite{peszab97}.
\end{exa}

\begin{xrem}
By similar way, we can consider a more general equation
\begin{equation}\label{eqparg}
dX(x, t)=AX(x, t)\,dt+\sum_{1\le i\le j}f_i(x, t)d\mu_i(t),\quad X(0)=\xi,
\end{equation}
which includes the case
\[
dX(x, t)=AX(x, t)\,dt+f_1(x, t)dt+f_2(x, t)d\mu(t),\quad X(0)=\xi.
\]
The solution of~\eqref{eqparg} is
\[
X(x, t)=S(t)\xi(x)+\sum_{1\le i\le j}\int_{[0, t]} [S(t-s)f_i(x, s)]\,d\mu_i(s) .
\]
Under assumptions of Theorem~\ref{thhbmuinw}, the solution of~\eqref{eqparg} is unique.
\end{xrem}

\bigskip

\textsc{Department of Mathematical Analysis, Taras Shevchenko National University of Kyiv, Kyiv 01601, Ukraine}\\
\emph{E-mail adddress}: \verb"vradchenko@univ.kiev.ua"

\end{document}